\documentclass[12pt]{amsart}
\usepackage{amssymb,verbatim,amscd,amsmath,graphicx}
\usepackage{graphicx}
\usepackage{caption}
\usepackage{subcaption}
\usepackage{pdfsync}
%\usepackage{showkeys}
%\nonstopmode

\textwidth=16.00cm
\textheight=22.00cm
\topmargin=0.00cm
\oddsidemargin=0.30cm
\evensidemargin=0.30cm
\headheight=0cm
\headsep=0.5cm
\setlength{\parskip}{3pt}

\numberwithin{equation}{section}
\hyphenation{semi-stable}

\newtheorem{theorem}{Theorem}[section]
\newtheorem{lemma}[theorem]{Lemma}

\newtheorem{corollary}[theorem]{Corollary}

\theoremstyle{definition}
\newtheorem{definition}[theorem]{Definition}
\newtheorem{def-prop}[theorem]{Definition-Proposition}

\newtheorem{remark}[theorem]{Remark}
\newtheorem{example}[theorem]{Example}

\newtheorem{question}[theorem]{Question}

\DeclareMathOperator{\reg}{reg}

\DeclareMathOperator{\supp}{supp}

\newcommand{\BR}{{\Big\rfloor}}
\newcommand{\BL}{{\Big\lfloor}}

\newcommand{\ZZ}{{\mathbb Z}}
\newcommand{\NN}{{\mathbb N}}

\def\alb {\mathbf {\alpha}}

\def\h {\widetilde{H}}

\def\1{{\bf 1}}
\def\0{{\bf 0}}

%%%%%%%%%%%%%%%%%%%%%%%%%%%%%%%%%%%%%%%%%%%%%%%%%%%%%%%%%%%%%%%%%%%%%%%%%%%%%%%%%%%

\begin{document}

%%%%%%%%%%%%%%%%%%%%%%%%%%%%%%%%%%%%%%%%%%%%%%%%%%%%%%%%%%%%%%%%%%%%%%%%%%%%%%%%%%%

\title{Regularity of powers of forests and cycles}

\author{Selvi Beyarslan}
\address{Tulane University \\ Department of Mathematics \\
6823 St. Charles Ave. \\ New Orleans, LA 70118, USA}
\email{sbeyarsl@tulane.edu}

\author{Huy T\`ai H\`a}
\address{Tulane University \\ Department of Mathematics \\
6823 St. Charles Ave. \\ New Orleans, LA 70118, USA}
\email{tha@tulane.edu}
\urladdr{http://www.math.tulane.edu/$\sim$tai/}

\author{Tr\^an Nam Trung}
\address{Institute of Mathematics \\ Vietnam Academy of Science and Technology, VAST \\ 18 Hoang Quoc Viet \\ Hanoi, Vietnam}
\email{tntrung@math.ac.vn}

%\keywords{hypergraphs, associated primes, monomial ideals, chromatic number, perfect graphs, Alexander duality, cover ideals}
%\subjclass[2000]{13F55, 05C17, 05C38, 05E99}
%\thanks{H\`a is partially supported by the Simons Foundation (grant \#551654). Trung is partially supported by NAFOSTED (Vietnam)}

\begin{abstract}
Let $G$ be a graph and let $I = I(G)$ be its edge ideal. In this paper, when $G$ is a forest or a cycle, we explicitly compute the regularity of $I^s$ for all $s \ge 1$. In particular, for these classes of graphs, we provide the asymptotic linear function $\reg(I^s)$ as $s \gg 0$, and the initial value of $s$ starting from which $\reg(I^s)$ attains its linear form. We also give new bounds on the regularity of $I$ when $G$ contains a Hamiltonian path and when $G$ is a Hamiltonian graph.
\end{abstract}

\maketitle

%%%%%%%%%%%%%%%%%%%%%%%%%%%%%%%%%%%%%%%%%%%%%%%%%%%%%%%%%%%%%%%%%%%%%%%%%%%%%%

\section{Introduction} \label{sec.intro}

Let $R = k[x_1, \dots, x_n]$ be a polynomial ring and let $I \subset R$ be a homogeneous ideal. It is a celebrated result that $\reg(I^s)$ is asymptotically a linear function for $s \gg 0$ (cf. \cite{Chardin2007, CHT, Ko, TW}). However, the question of starting from which value of $s$ this function becomes linear and finding the exact form of the linear function continues to elude us (cf. \cite{Be, Ch, EH, EU, Ha1}), even in simple situations such as when $I$ is generated by general linear forms (cf. \cite{EH}) or when $I$ is a monomial ideal (cf. \cite{Conca, Hasurvey}). In this paper, we address this question when $I = I(G)$ is the edge ideal of a graph. In this case, there exist integers $b$ and $s_0$ such that $\reg(I^s) = 2s + b$ for all $s \ge s_0$, and the problem is to identify $b$ and $s_0$ via combinatorial structures of the graph $G$.

There are very few examples of graphs for which $b$ and $s_0$ are found. Even in the simplest case, it is a difficult problem to characterize graphs with $b=0$, i.e., when $\reg(I(G)^s) = 2s \ \forall \ s \gg 0$ (cf. \cite{NP}). Herzog, Hibi and Zheng \cite{HHZ} showed that if $I(G)$ has a linear resolution then so does $I(G)^s$ for all $s \ge 1$. This and Fr\"oberg's characterization of graphs whose edge ideals have linear resolutions (cf. \cite{Fr, EGHP, HVT2005}) imply that if the complement graph of $G$ contains no induced cycles of length $\ge 4$ then $b = 0$ and $s_0 = 1$. Ferr\`o, Murgia and Olteanu \cite{FMNO} showed that it is also the case that $b = 0$ and $s_0=1$ for initial and final lexsegment edge ideals. In a recent preprint, Alilooee and Banerjee \cite{AB} proved that if $G$ is a bipartite graph such that $\reg(I(G)) = 3$ then $\reg(I(G)^s) = 2s+1$ for all $s \ge 1$, i.e., $b = 1$ and $s_0 = 1$. Our results add to this list of rare instances a number of new classes of graphs for which $b$ and $s_0$ can be computed explicitly. More specifically, we shall compute $\reg(I(G)^s)$ for all $s \ge 1$ when $G$ is a \emph{forest} and when $G$ is a \emph{cycle}.

As in many previous works in the literature (cf. \cite{BHO, Fa, HVT2005, HVT2008, MO}), we start with the case when $G$ contains no cycles. That is, when $G$ is a forest. Our first result is stated as follows.

\begin{theorem}[Theorem \ref{T2}] \label{intro11}
Let $G$ be a forest with edge ideal $I = I(G)$. Let $\nu(G)$ denote the induced matching number of $G$. Then for all $s \ge 1$, we have
$$\reg(I^s) = 2s + \nu(G)-1.$$
\end{theorem}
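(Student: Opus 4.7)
I would prove the equality by establishing matching lower and upper bounds. The lower bound $\reg(I^s) \ge 2s + \nu(G) - 1$ holds for any graph: fix an induced matching $\{e_1, \ldots, e_k\}$ of $G$ with $k = \nu(G)$, and let $H$ be the induced subgraph of $G$ on the $2k$ vertices of this matching. Then $H$ is a disjoint union of $k$ edges, so $I(H)$ is a monomial complete intersection of $k$ quadrics whose Koszul resolution gives $\reg(I(H)^s) = 2s + k - 1$. A standard induced-subgraph monotonicity for the regularity of powers of edge ideals (proved by setting the outside variables to zero and comparing Tor tables) then yields $\reg(I(G)^s) \ge \reg(I(H)^s) = 2s + \nu(G) - 1$.

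For the upper bound I would induct on $s$. The base case $s = 1$ is the theorem of Zheng (and H\`a--Van Tuyl): $\reg(I(G)) = \nu(G) + 1$ for every forest. For the inductive step, I would invoke Banerjee's colon-ideal reduction,
$$\reg(I^{s+1}) \le \max\Bigl\{\reg(I^s),\ \max_{M \in \mathcal{G}(I^s)} \bigl(\reg(I^{s+1} : M) + 2s\bigr)\Bigr\}.$$
The inductive hypothesis gives $\reg(I^s) = 2s + \nu(G) - 1 \le 2(s+1) + \nu(G) - 1$, so it suffices to establish $\reg(I^{s+1} : M) \le \nu(G) + 1$ for each minimal monomial generator $M = e_{i_1}\cdots e_{i_s}$ of $I^s$.

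For this I would use Banerjee's description of $(I^{s+1} : M)$ as the edge ideal of an auxiliary graph $G^M$ on $V(G)$ obtained from $G$ by adjoining an edge $\{a, b\}$ whenever $\{a, b\}$ is \emph{even-connected} through $M$, i.e., joined by an even alternating walk whose even-indexed steps lie in the multiset $\{e_{i_1}, \ldots, e_{i_s}\}$. The main obstacle, and the point at which the forest hypothesis becomes decisive, is to prove $\reg(I(G^M)) \le \nu(G) + 1$. Since $G$ is acyclic, each alternating walk traces a unique path in $G$, and the even-connected pairs are endpoints of ``shortcuts'' along such paths. I would show, by a careful case analysis of alternating paths in the underlying tree, that (i) $\nu(G^M) \le \nu(G)$, and (ii) $G^M$ still lies in a class of graphs --- essentially a forest together with compatible added chords and pendant whiskers --- for which the first-step bound $\reg(I(G^M)) \le \nu(G^M) + 1$ continues to hold. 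Verifying (i) and (ii) is where the bulk of the forest-specific combinatorial work will lie.
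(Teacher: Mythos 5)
Your lower bound coincides with the paper's own argument (restrict to the induced subgraph on an induced matching, compute the regularity of powers of the resulting complete intersection, and use monotonicity of graded Betti numbers under passing to induced subgraphs), so that half is fine. Your upper bound, however, takes a genuinely different route from the paper --- Banerjee's colon-ideal reduction and the even-connection graph $G^M$ --- and it stops exactly where the difficulty begins. Claims (i) and (ii) are asserted, not proved, and (ii) is not even a precise statement: you must exhibit a concrete class of graphs, containing every $G^M$ that arises and stable under iterating the construction (the induction on $s$ requires the bound $\reg(I^{s+1}:M) \le \nu(G)+1$ for every minimal generator $M$ of $I^s$ and every $s$), in which the inequality $\reg \le \nu + 1$ holds. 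That inequality fails for graphs in general ($C_5$ has $\nu = 1$ but regularity $3$), and a forest with even-connection chords added is not obviously chordal, co-chordal-coverable, or otherwise of a type for which the bound is known; showing $\nu(G^M) \le \nu(G)$ is likewise a real combinatorial lemma, since adding chords can in principle create new induced matchings. So as written this is a plan whose central lemma is missing. (The plan can be carried out --- this is essentially how the paper itself handles \emph{cycles} in Section \ref{sec.cycle}, where it must first prove the new Hamiltonian-path and Hamiltonian-cycle regularity bounds of Section \ref{sec.Hamilton} precisely in order to control the auxiliary graphs --- but for forests the analogous structural analysis is the bulk of the work, not a footnote.)

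For comparison, the paper's upper bound for forests avoids $G^M$ entirely. It proves the more flexible Lemma \ref{L4}: if a forest $K$ is the edge-disjoint union of induced subgraphs $H$ and $G$, then $\reg(I(H)+I(G)^s) \le 2s + \nu(K) - 1$. The induction is on $s + |V_G|$: choosing a leaf $x$ of $G$ with neighbor $y$, Morey's identity $I(G)^s : xy = I(G)^{s-1}$ yields a short exact sequence whose outer terms are again ideals of the same mixed form, one with the power dropped to $s-1$ (for the subforest $H' \cup G$) and one with $G$ replaced by $G \setminus x$ and $H$ by $H + xy$. Taking $H = \emptyset$ and $G = K$ recovers the theorem, with the $s=1$ and $E_G = \emptyset$ cases handled by Zheng's formula $\reg(I(K)) = \nu(K)+1$. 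This interpolation between $I(K)$ and $I(G)^s$ is the device for which your proposal currently has no substitute.
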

\noindent Notice that for this class of graphs, we have $b = \nu(G) - 1$ is not zero in general, but $s_0$ is still 1.

Moving away from forests, the next class of graphs to consider are cycles. Our second result exhibits the first class of graphs for which $\reg(I(G)^s)$ can be computed for all $s$ and $s_0 \not= 1$.

\begin{theorem}[Theorem \ref{thm.cycle}] \label{intro12}
Let $C_n$ be the $n$-cycle and let $I = I(C_n)$ be its edge ideal. Let $\nu = \lfloor \frac{n}{3} \rfloor$ denote the induced matching number of $C_n$. Then
$$\reg(I) = \left\{ \begin{array}{rcll} \nu + 1 & \text{if} & n \equiv 0,1 & (\text{mod } 3) \\
\nu + 2 & \text{if} & n \equiv 2 & (\text{mod } 3), \end{array} \right.$$
and for all $s \ge 2$, we have
$$\reg(I^s) = 2s + \nu-1.$$
\end{theorem}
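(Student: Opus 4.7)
The plan is to treat the two parts of the theorem separately. Write $I = I(C_n)$ with vertices labelled cyclically $x_1, \ldots, x_n$, and $\nu = \lfloor n/3 \rfloor$.

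For the single-power formula $\reg(I)$, I would induct on $n$ via the short exact sequence
$$0 \to R/(I:x_1)(-1) \xrightarrow{\cdot x_1} R/I \to R/(I, x_1) \to 0,$$
which gives $\reg(R/I) \leq \max\{\reg(R/(I:x_1))+1,\ \reg(R/(I,x_1))\}$. Here $(I,x_1) = (x_1) + I(P_{n-1})$ where $P_{n-1}$ is the path $x_2 {-} x_3 {-} \cdots {-} x_n$, and $(I:x_1) = (x_2, x_n) + I(P_{n-5})$ where $P_{n-5}$ is the induced path $x_4 {-} \cdots {-} x_{n-2}$. Both associated graphs are forests, so Theorem~\ref{intro11} applies; a residue-mod-$3$ computation on $\nu(P_{n-1})$ and $\nu(P_{n-5})$ yields the claimed upper bound for $\reg(I)$. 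For the matching lower bound, Katzman's inequality gives $\reg(I) \geq \nu + 1$, and in the exceptional case $n \equiv 2 \pmod 3$ I would detect the extra $+1$ by exhibiting a nonzero graded Betti number via Hochster's formula applied to a carefully chosen induced subcomplex of the independence complex of $C_n$ (for instance, the complement of a longest induced matching together with one extra vertex).

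For the higher-power formula with $s \geq 2$, the lower bound $\reg(I^s) \geq 2s + \nu - 1$ is a general fact for edge ideals, so the task reduces to the upper bound, which I would prove by induction on $s$ using the framework of Banerjee:
$$\reg(I^{s+1}) \leq \max\bigl\{ \reg(I^{s+1} : M) + 2s \ \big|\ M \text{ a minimal monomial generator of } I^s\bigr\} \cup \{\reg(I^s)\}.$$
The colon ideals $(I^{s+1} : M)$ are known to equal $I$ together with the quadratic monomial ideal encoding pairs of vertices even-connected through $M$ in $C_n$. Even-connections on a cycle manifest as chords, which have the effect of shrinking induced matchings of the augmented graph. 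I would verify by a case analysis on the distribution of the edges of $M$ around the cycle that $\reg(I^{s+1} : M) \leq \nu + 1$ for every such $M$; combined with the inductive hypothesis $\reg(I^s) = 2s + \nu - 1$, this gives $\reg(I^{s+1}) \leq 2(s+1) + \nu - 1$.

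The base case $s = 2$ is where the argument is most delicate: we need $\reg(I^2) \leq \nu + 3$, which does \emph{not} follow from the $s = 1$ regularity when $n \equiv 2 \pmod 3$, and is precisely the reason $s_0 = 2$ in that case. Here the same colon inequality must deliver the stronger conclusion $\reg(I^2 : e) \leq \nu + 1$ for each edge $e$. The main obstacle throughout is this uniform bound $\reg(I^{s+1} : M) \leq \nu + 1$: the even-connection graph depends intricately on how the edges of $M$ are distributed around $C_n$, and controlling its induced matching number requires a combinatorial case split (indexed by residues mod $3$ of the arc lengths between consecutive edges of $M$), together with polarization to handle the squared-variable generators $x_i^2$ produced by odd closed even-walks.
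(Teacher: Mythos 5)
Your overall architecture for the $s \ge 2$ part matches the paper's: invoke Banerjee's theorem to reduce to the uniform bound $\reg(I^{s+1}:M) \le \nu+1$, describe the colon ideal via even-connections, and polarize to handle generators $x_i^2$. (For the $s=1$ formula the paper simply cites Jacques's thesis; your path-decomposition sketch is a reasonable alternative route, though the lower bound in the case $n \equiv 2 \pmod 3$ is only gestured at.) However, there is a genuine gap at the crux of the argument. After polarization, the colon ideal is the edge ideal of a graph $G$ obtained from $C_n$ by adding chords, and you need the \emph{upper} bound $\reg(G) \le \nu+1 = \lfloor n/3\rfloor + 1$. Your only stated mechanism for this is that the added chords ``have the effect of shrinking induced matchings of the augmented graph,'' followed by a case analysis on arc lengths. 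But the induced matching number only bounds regularity from \emph{below} ($\reg(G) \ge \nu(G)+1$ for any graph); showing that $\nu(G)$ is small says nothing, by itself, about $\reg(G)$ being small. Since the augmented graphs $G$ are not known a priori to lie in a class (chordal, weakly chordal, etc.) where $\reg = \nu + 1$, no amount of case analysis on induced matchings alone closes this step.

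The paper fills exactly this hole with two new structural regularity bounds that your proposal does not contain and that constitute the real work: if $G$ has a Hamiltonian path on $n$ vertices then $\reg(G) \le \lfloor (n+1)/3\rfloor + 1$, and if $G$ has a Hamiltonian cycle $x_1,\dots,x_n,x_1$ together with a chord of the form $\{x_{t-1},x_{t+2}\}$ then $\reg(G) \le \lfloor n/3\rfloor + 1$. These are proved by a double induction (on the number of vertices and on the degree of an endpoint of the Hamiltonian path, respectively on the number of edges), using the deletion inequalities $\reg(G) \le \max\{\reg(G\setminus x), \reg(G\setminus N[x])+1\}$ and $\reg(G) \le \max\{2, \reg(G\setminus e), \reg(G_e)+1\}$. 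The augmented graph $G$ always contains $C_n$ as a Hamiltonian cycle, and for $n \ge 5$ the even-connection through a single edge $x_2x_3$ of $M$ supplies the needed chord $\{x_1,x_4\}$, so the second bound applies directly. A secondary, smaller omission: after polarizing you must still argue that the new vertices $y_i$ can be deleted without changing the regularity; the paper does this using the fact (from its Lemma 5.1) that whenever $x_i^2$ is a generator of the colon ideal, so is $x_ix_j$ for every $j$, which forces $G \setminus N[x_i]$ to consist of isolated vertices and allows an application of the Dao--Huneke--Schweig deletion lemma. Without some version of these ingredients, the proposal does not yield the theorem.
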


The computations in Theorems \ref{intro11} and \ref{intro12} are inspired by a simple bound for the regularity of the edge ideal of any graph, namely, $\reg(I(G)) \ge \nu(G) + 1$ (cf. \cite{HVT2008, K, MV}). We extend this bound to include all powers of $I(G)$. In fact, in Theorem \ref{T1}, we prove that if $G$ is any graph and $\nu(G)$ denotes its induced matching number then for all $s \ge 1$,
\begin{align}
\reg(I(G)^s) & \ge 2s + \nu(G)-1. \label{eq.intro}
\end{align}

In light of (\ref{eq.intro}) it then remains to establish the inequality in the other direction. To to so, in Theorem \ref{intro11} (i.e., when $G$ is a forest), we use a non-standard inductive technique. Our induction is based on a sum of different powers of edge ideals of induced subgraphs, noting that a subgraph of a forest is also a forest. Particularly, in examining the regularity of $I(G)^s$, we investigate the regularity of ideals of the form $I(G') + I(G'')^s$, where $G'$ and $G''$ are induced subgraphs of $G$ having no edges in common and whose union is $G$ itself. Observe that in the two extreme cases: (a) when $G'$ has no edges, this sum recovers $I(G)^s$; and (b) when $G''$ has no edges, this sum amounts to the edge ideal of a forest, something we understand quite well.

To settle a similar task in Theorem \ref{intro12} (i.e., when $G$ is a cycle), we make use of Banerjee's recent work \cite{Ba}, in which the regularity of $I(G)^s$ can be bounded via that of ideals of the form $I(G)^s : M$, where $M$ is a minimal generator of $I(G)^{s-1}$. These ideals in general are not squarefree. Using polarization we reduce these ideals to edge ideals of a different class of graphs. The problem is now to bound the regularity of edge ideals of those new graphs. We accomplish this by providing new bounds for the regularity of $I(G)$ when $G$ is a graph containing Hamiltonian paths or Hamiltonian cycles.

Our new bounds for the regularity of $I(G)$ when $G$ contains Hamiltonian paths or cycles are interesting on their own. Finding bounds for the regularity of $I(G)$ in terms of combinatorial data of $G$ is an active research program in combinatorial commutative algebra in recent years (see \cite{Hasurvey} and references therein). For any graph $G$, making use of the matching number bound of \cite{HVT2005}, it is easy to see that $\reg(I(G)) \le \lfloor \frac{n}{2} \rfloor + 1$. Our results show that this bound can be improved significantly when $G$ contains Hamiltonian paths or cycles. Our proof of Theorem \ref{intro2} is a combination of induction on the number of vertices and the degree of certain vertices in $G$.

\begin{theorem}[Theorems \ref{thm.Hamiltonpath} and \ref{thm.Hamiltoncycle}] \label{intro2}
Let $G$ be a graph over $n$ vertices.
\begin{enumerate}
\item If $G$ contains a Hamiltonian path then
$$\reg(I(G)) \le \Big\lfloor \dfrac{n+1}{3} \Big\rfloor + 1.$$
\item If $G$ contains a Hamiltonian cycle then
$$\reg(I(G)) \le \Big\lfloor \dfrac{n}{3} \Big\rfloor + 1.$$
\end{enumerate}
\end{theorem}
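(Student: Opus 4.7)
The plan is to prove both parts by a nested induction: primarily on the number of vertices $n$, and secondarily on the degrees of certain distinguished vertices of $G$, as hinted by the authors. The main tool is the short exact sequence
\[
0 \to (R/(I(G):x_v))(-1) \to R/I(G) \to R/(I(G), x_v) \to 0
\]
for a vertex $v$. Using the identifications $R/(I(G):x_v) \cong k[x_u : u \notin N(v)]/I(G \setminus N[v])$ and $R/(I(G), x_v) \cong k[x_u : u \neq v]/I(G \setminus v)$, this yields
\[
\reg(I(G)) \le \max\bigl\{\reg(I(G \setminus N[v])) + 1,\ \reg(I(G \setminus v))\bigr\}
\]
for any vertex $v$, which will drive the induction. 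Base cases with small $n$ are handled directly.

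For part (1), fix a Hamiltonian path $v_1, \ldots, v_n$ of $G$; by reversing if necessary, one may assume $\deg_G(v_1) \le \deg_G(v_n)$. The cleanest case is $\deg(v_1) = 1$ together with $\deg(v_2) = 2$: applying the above bound with $v = v_2$, the graph $G \setminus N[v_2]$ still contains the Hamiltonian path $v_4 \cdots v_n$ on $n-3$ vertices, and $G \setminus v_2$ decomposes into the isolated vertex $v_1$ and a graph containing the Hamiltonian path $v_3 \cdots v_n$ on $n-2$ vertices; the inductive hypothesis applied to each piece, combined with a case check in $n \bmod 3$, yields the bound $\lfloor(n+1)/3\rfloor + 1$. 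When $\deg(v_1) = 1$ but $\deg(v_2) \ge 3$, I would invoke the secondary induction on $\deg(v_2)$: using an off-path neighbor $v_j$ of $v_2$ (with $j \ge 4$), applying the short exact sequence at a well-chosen vertex reduces either to a subgraph on fewer vertices still carrying a Hamiltonian path, or to a graph on $n$ vertices with $\deg(v_2)$ smaller. Finally, when $\deg(v_1) \ge 2$, the hypothesis forces $G$ to have no leaves (any leaf must be an endpoint of every Hamiltonian path, but the endpoints here have degree $\ge 2$); in this regime I would apply the sequence at $v_1$ and control the resulting subgraphs by secondary induction on $\deg(v_1)$, using the disjoint-union formula $\reg(I(G_1 \sqcup G_2)) = \reg(I(G_1)) + \reg(I(G_2)) - 1$ to handle fragmentation.

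For part (2), let $v_1 v_2 \cdots v_n v_1$ be a Hamiltonian cycle. For any vertex $v_i$, the graph $G \setminus v_i$ contains the Hamiltonian path $v_{i+1} \cdots v_{i-1}$ on $n-1$ vertices, so part (1) immediately gives $\reg(I(G \setminus v_i)) \le \lfloor n/3 \rfloor + 1$. For the first term, I would select $v_i$ of minimum degree on the cycle and observe that $G \setminus N[v_i]$ is a disjoint union of subgraphs, each of which inherits a Hamiltonian subpath from the original cycle and is hence amenable to part (1); combining the components via the disjoint-union formula and the short exact sequence then yields $\reg(I(G)) \le \lfloor n/3 \rfloor + 1$.

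The hard part will be the sub-case in part (1) where $\deg(v_1) = 1$ and $\deg(v_2) \ge 3$, together with the analogous case of both endpoints having degree $\ge 2$: deleting the neighborhood of the distinguished vertex fragments the Hamiltonian path, so the reduced subgraph need not carry a single long Hamiltonian path, and the primary induction on $n$ alone falls short by one when $n \equiv 1 \pmod 3$. Closing this gap is precisely where the secondary induction on the vertex degrees must carry the argument, together with careful modular tracking of the floor function across the three residue classes modulo $3$.
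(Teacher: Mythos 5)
Your overall architecture (double induction on $n$ and on vertex degrees, driven by the deletion inequalities) matches the paper's, but the plan has a gap at exactly the point you flag as "the hard part," and the device you propose to close it cannot work. When $G \setminus N[v]$ (or $G_e = G\setminus N[e]$) fragments into several pieces, you intend to apply the inductive bound to each component and recombine via $\reg(I(G_1 \sqcup G_2)) = \reg(I(G_1)) + \reg(I(G_2)) - 1$. That formula is correct, but the resulting bound is too weak: a disjoint union of $k$ edges has regularity $k+1$, far above $\lfloor (2k+1)/3 \rfloor + 1$, so the class of graphs satisfying your bound is simply not closed under disjoint union and no bookkeeping of residues mod $3$ will save the recombination step. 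The paper avoids this entirely by never applying the induction hypothesis to the fragmented graph. Instead it uses the monotonicity of regularity under induced subgraphs (Theorem \ref{thm.induction}(1)): $G \setminus N[x_2]$ is an induced subgraph of the induced graph $K$ on $\{x_4,\dots,x_n\}$, which is connected with a Hamiltonian path on $n-3$ vertices, so $\reg(G\setminus N[x_2]) \le \reg(K) \le \lfloor (n+1)/3\rfloor$ --- no hypothesis on $\deg(x_2)$ is needed, which dissolves your $\deg(v_2)\ge 3$ sub-case. For $\deg(x_1)=d\ge 2$ the paper runs the secondary induction by deleting the \emph{edge} $e=\{x_1,x_t\}$ (using part (3) of Theorem \ref{thm.induction}), which preserves the Hamiltonian path while lowering $d$; and when $G_e$ splits into two arcs it is embedded as an induced subgraph of a graph $H'$ obtained by adding the shortcut edge $\{x_{t-1},x_{t+1}\}$, legitimate because $x_{t-1},x_{t+1}\in N[e]$ are absent from $G_e$. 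These two ideas --- bounding by a connected induced supergraph, and reconnecting arcs with a shortcut edge before invoking induction --- are what your proposal is missing.

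For part (2), be aware that the statement as paraphrased in the introduction is not literally true: $C_5$ contains a Hamiltonian cycle and $\reg(I(C_5)) = 3 > \lfloor 5/3\rfloor + 1$. The theorem actually proved (Theorem \ref{thm.Hamiltoncycle}) carries the extra hypothesis that the Hamiltonian cycle has a chord of the form $\{x_{t-1},x_{t+2}\}$, and its proof inducts on the number of edges, deleting a second chord and again using the $H'$ construction. Your argument for part (2) breaks precisely where it must: choosing $v_i$ of minimum degree gives $G\setminus N[v_i]$ at most $n-3$ vertices, whence at best $\reg(G\setminus N[v_i]) + 1 \le \lfloor (n-2)/3\rfloor + 2$, which equals $\lfloor n/3\rfloor + 2$ when $n \equiv 2 \pmod 3$; that slack is unavoidable without the chord hypothesis, as the $C_5$ example shows.
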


Our paper is structured as follows. In the next section, we collect notations and terminology used in the paper, and recall a few auxiliary results. In Section \ref{sec.Hamilton}, we provide new bounds for the regularity of an edge ideal when the graph contains a Hamiltonian path or it is a Hamiltonian graph. Sections \ref{sec.forest} and \ref{sec.cycle} are devoted to prove our main results, Theorems \ref{intro11} and \ref{intro12}. We start Section \ref{sec.forest} by establishing the bound (\ref{eq.intro}), and continue with the proof of Theorem \ref{intro11}. The proof of Theorem \ref{intro12} is given in Section \ref{sec.cycle}.

\vspace{1em}

\noindent{\bf Acknowledgement.} Part of this work was done while H\`a and Trung were at the Vietnam Institute of Advanced Studies in Mathematics (VIASM) in Hanoi, Vietnam. We would like to thank VIASM for its hospitality. H\`a is partially supported by the Simons Foundation (grant \#279786). Trung acknowledges supports partially from NAFOSTED (Vietnam), project 101.01-2011.48. We would also like to thank an anonymous referee for many helpful comments.

%%%%%%%%%%%%%%%%%%%%%%%%%%%%%%%%%%%%%%%%%%%%%%%%%%%%%%%%%%%%%%%%%%%%%%%%%%%%%%

\section{Preliminaries} \label{sec.prel}

We shall follow standard notations and terminology from usual texts in the research area (cf. \cite{BrHe, E, HH, MS}).

Let $k$ be a field, let $R = k[x_1, \dots, x_n]$ be a standard graded polynomial ring over $n$ variables. The object of our work is the Castelnuovo-Mumford regularity of graded modules and ideals over $R$. This invariant can be defined in various ways. For our purpose, we recall the definition that uses the minimal free resolution.

\begin{definition} Let $T$ be a finitely generated graded $R$-module and let
$$0 \rightarrow \bigoplus_{j \in \ZZ} R(-j)^{\beta_{p,j}(T)} \rightarrow \cdots \rightarrow \bigoplus_{j \in \ZZ} R(-j)^{\beta_{0,j}(T)} \rightarrow T \rightarrow 0$$
be its minimal free resolution. Then the \emph{regularity} of $T$ is defined by
$$\reg(T) = \max \{j-i ~|~ \beta_{i,j}(T) \not= 0\}.$$
\end{definition}

We identify the variables of $R$ with distinct vertices $V = \{x_1, \dots, x_n\}$. A \emph{graph} $G = (V,E)$ consists of $V$ and a set $E$ of edges connecting pairs of vertices (we think of an edge as a set containing two vertices). When not already specified, we use $V_G$ and $E_G$ to denote the vertices and edges, respectively, of a given graph $G$. Throughout the paper, we shall restrict our attention to \emph{simple} graphs, i.e., graphs without loops nor multiple edges. The algebra-combinatorics framework used in this paper is described via the edge ideal construction.

\begin{definition} Let $G = (V,E)$ be a graph. The \emph{edge ideal} of $G$ is defined to be
$$I(G) = \big( uv ~|~ \{u,v\} \in E \big) \subseteq R = k[x_1, \dots, x_n].$$
\end{definition}

\begin{remark} When discussing the regularity of edge ideals, for simplicity of notation, we shall use $\reg(G)$ to also refer to $\reg(I(G))$.
\end{remark}

\begin{remark} We often write $uv \in E$ instead of $\{u,v\} \in E$. By abusing notation, we use $uv$ to refer to both the edge $uv \in E$ and the monomial $uv \in I(G)$. If $e = \{u,v\}$ is an edge then we shall further write $x^e$ to denote the monomial $uv$ corresponding to $e$.
\end{remark}

For a vertex $u$ in a graph $G = (V,E)$, let $N_G(u) = \{v \in V ~|~ uv \in E\}$ be the set of \emph{neighbors} of $u$, and set $N_G[u] := N_G(u) \cup \{u\}$. An edge $e$ is \emph{incident} to a vertex $u$ if $u \in e$. The \emph{degree} of a vertex $u \in V$, denoted by $\deg_G(u)$, is the number of edges incident to $u$. When there is no confusion, we shall omit $G$ and write $N(u), N[u]$ and $\deg(u)$.

For an edge $e$ in a graph $G$, define $G\setminus e$ to be the subgraph of $G$ with
the edge $e$ deleted (but its vertices remained). For a subset $W \subseteq V$ of the vertices in $G$, define $G \setminus W$ to be the subgraph of $G$ with the vertices in $W$ (and their incident edges) deleted. When $W = \{u\}$ consists of a single vertex, we write $G \setminus u$ instead of $G \setminus \{u\}$. If $e = \{u,v\}$ then set $N_G[e] = N_G[u] \cup N_G[v]$ and define $G_e$ to be the subgraph $G \setminus N_G[e]$ of $G$.

\begin{remark} \label{rmk.flat}
Let $R \hookrightarrow S$ be a ring extension obtained by adjoining new variables to $R$. Let $I \subseteq R$ be a homogeneous ideal and let $IS$ be its extension in $S$. Since the extension $R \hookrightarrow S$ is flat, the minimal free resolution of $IS$ (as an $S$-module) can be obtained from that of $I$ (as an $R$-module) by tensoring with $S$. Thus, $\reg(I) = \reg(IS)$. This allows us to write $\reg(I)$ for both the regularity of $I$ as an $R$-module and the regularity of $IS$ as an $S$-module. As a consequence, if $G$ is a graph containing an isolated vertex $u$ then $\reg(I(G)) = \reg(I(G \setminus u))$, and we can freely drop the vertex $u$ from $G$.
\end{remark}

\begin{remark} \label{rmk.1var}
Let $y$ be a new indeterminate and let $S = R[y]$. Let $I$ be a homogeneous ideal in $R$. Observe that $S\big/(I,y) \simeq R/I \otimes_k k[y]/(y)$. This implies that the minimal free resolution of $S/(I,y)$ is obtained by taking the tensor product of those of $R/I$ and $k[y]/(y)$. Thus, $\reg\big(S\big/(I,y)\big) = \reg(R/I)$, where the first regularity is taken over $S$ and the second regularity is taken over $R$. This and Remark \ref{rmk.flat} allow us to conclude without any ambiguity that
$\reg(I) = \reg(I + (y)).$
\end{remark}

A graph $H$ is called an \emph{induced subgraph} of $G$ if the vertices of $H$ are vertices in $G$, and for vertices $u$ and $v$ in $H$, $\{u,v\}$ is an edge in $H$ if and only if $\{u,v\}$ is an edge in $G$. The induced subgraph of $G$ over a subset $W \subseteq V$ is obtained by deleting vertices not in $W$ from $G$ (and their incident edges).

In the study of the regularity of edge ideals, induction have proved to be a powerful technique (see \cite{Hasurvey}). We recall a number of inductive results stated in \cite{Hasurvey} that we shall use in the paper.

\begin{theorem}[\protect{See \cite[Lemma 3.1, Theorems 3.4 and 3.5]{Hasurvey}}] \label{thm.induction}
Let $G = (V,E)$ be a graph.
\begin{enumerate}
\item If $H$ is an induced subgraph of $G$ then
$\reg(H) \le \reg(G)$.
\item Let $x \in V$. Then
$$\reg(G) \le \max \{\reg(G \setminus x), \reg(G \setminus N[x]) + 1\}.$$
\item Let $e \in E$. Then
$$\reg(G) \le \max \{2, \reg(G \setminus e), \reg(G_e) + 1\}.$$
\end{enumerate}
\end{theorem}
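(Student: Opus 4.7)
The plan is threefold: Hochster's formula handles part (1), while standard short exact sequences obtained from multiplication by $x$ and by the monomial $uv$ handle parts (2) and (3) respectively. Throughout, the auxiliary colon and sum ideals that arise will be identified with edge ideals of subgraphs with extra variables adjoined, and those extra variables will be eliminated at the level of regularity via Remark \ref{rmk.1var}.

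For part (1), I would appeal to Hochster's formula for squarefree monomial ideals:
$$\beta_{i,j}(R/I(G)) = \sum_{W \subseteq V_G,\, |W| = j} \dim_k \tilde H_{j-i-1}(\Delta_{G[W]}; k),$$
where $\Delta_{G[W]}$ denotes the independence complex of the induced subgraph $G[W]$. Since $H$ is an induced subgraph of $G$, we have $H[W] = G[W]$ for every $W \subseteq V_H$. Therefore the set of pairs $(W, j-i-1)$ contributing to $\reg(R/I(H))$ is contained in the corresponding set for $\reg(R/I(G))$, giving $\reg(H) \le \reg(G)$ immediately.

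For part (2), I would use the short exact sequence
$$0 \to \bigl(R/(I(G):x)\bigr)(-1) \xrightarrow{\,\cdot x\,} R/I(G) \to R/(I(G),x) \to 0.$$
A direct monomial calculation yields $(I(G):x) = (N_G(x)) + I(G \setminus N_G[x])$ and $(I(G),x) = I(G \setminus x) + (x)$. Since the adjoined variables $N_G(x)$ and $x$ do not occur in the supports of $I(G \setminus N_G[x])$ and $I(G \setminus x)$ respectively, iterated applications of Remark \ref{rmk.1var} give
$$\reg\bigl(R/(I(G):x)\bigr) = \reg\bigl(R/I(G \setminus N_G[x])\bigr), \quad \reg\bigl(R/(I(G),x)\bigr) = \reg\bigl(R/I(G \setminus x)\bigr).$$
Substituting into the standard bound $\reg(R/I(G)) \le \max\{\reg(R/(I(G):x))+1,\ \reg(R/(I(G),x))\}$ and translating from $R/J$ to $J$ delivers (2).

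For part (3), with $e = uv$, I would run the analogous argument using
$$0 \to \bigl(R/(I(G \setminus e):uv)\bigr)(-2) \xrightarrow{\,\cdot uv\,} R/I(G \setminus e) \to R/I(G) \to 0,$$
coming from $I(G) = I(G \setminus e) + (uv)$. The colon identity
$$(I(G \setminus e):uv) = \bigl(N_G(u) \cup N_G(v) \setminus \{u,v\}\bigr) + I(G_e),$$
together with Remark \ref{rmk.1var}, identifies $\reg(R/(I(G \setminus e):uv))$ with $\reg(R/I(G_e))$. The usual bound from the short exact sequence then gives $\reg(I(G)) \le \max\{\reg(I(G_e))+1,\ \reg(I(G \setminus e))\}$ whenever $G_e$ has at least one edge. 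The extra $2$ in the statement is there precisely to absorb the degenerate case: when $G_e$ has no edges, $R/I(G_e)$ is a polynomial ring of regularity $0$, the colon term contributes $0 + 2 = 2$ to the bound on $\reg(I(G))$, and this matches the trivial lower bound for any nonzero edge ideal. I expect the only real care to be in verifying the monomial colon identity in (3) cleanly and in tracking the degree shifts across the translation $\reg(R/J) \leftrightarrow \reg(J)$; beyond that, the argument is routine.
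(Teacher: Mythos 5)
Your proposal is correct, but note that the paper does not actually prove Theorem \ref{thm.induction}: it imports all three parts from the survey \cite{Hasurvey} (Lemma 3.1, Theorems 3.4 and 3.5), so there is no in-paper proof to compare against line by line. Your reconstruction follows the standard arguments behind those cited results. For part (1), your Hochster-formula argument via independence complexes of induced subgraphs is essentially the $s=1$ case of what the paper itself proves later in Lemma \ref{L1} and Corollary \ref{C1} using upper-Koszul complexes, so it is fully consistent with the paper's toolkit. For parts (2) and (3), the short exact sequences built from multiplication by $x$ and by $uv$, together with the colon computations $(I(G):x) = (N_G(x)) + I(G\setminus N_G[x])$ and $(I(G\setminus e):uv) = (N_G[e]\setminus\{u,v\}) + I(G_e)$ and the elimination of the adjoined linear generators via Remark \ref{rmk.1var}, are exactly the Dao--Huneke--Schweig-style arguments the survey records; your explanation of how the constant $2$ in part (3) absorbs the case where $G_e$ is edgeless is the right accounting for the shift between $\reg(R/J)$ and $\reg(J)$. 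The only point worth tightening is the analogous degenerate case in part (2): when $G\setminus N[x]$ (or $G\setminus x$) has no edges, the translation from $\reg(R/I)$ to $\reg(I)$ silently relies on the convention that the regularity of an edgeless graph is taken to be $1$, a convention issue already present in the cited statement rather than a gap you introduced. With that caveat noted, the argument is complete.
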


In finding combinatorial bounds for the regularity of edge ideals, matching numbers and induced matching numbers have been used often. We recall their definitions.

\begin{definition} Let $G = (V,E)$ be a graph.
\begin{enumerate}
\item A collection of edges $\{e_1, \dots, e_s\} \subseteq E$ is called a \emph{matching} if they are pairwise disjoint. The largest size of a matching in $G$ is called its \emph{matching number} and denoted by $\beta(G)$.
\item A collection of edges $\{e_1, \dots, e_s\} \subseteq E$ is called an \emph{induced matching} if they form a matching and they are the only edges in the induced subgraph of $G$ over the vertices $\bigcup_{i=1}^s e_i$. The largest size of an induced matching in $G$ is called its \emph{induced matching number} and denoted by $\nu(G)$.
\end{enumerate}
\end{definition}

\begin{example}
Let $G$ be the graph in Figure \ref{fig.example}. Then $\{x_1x_2, x_3x_4\}$ forms a matching, but not an induced matching (the induced subgraph on $\{x_1, \dots, x_4\}$ also contains edges $\{x_2x_3, x_1x_4\}$). The set $\{x_1x_2, x_3x_4, x_5x_6\}$ forms a maximal matching of size 3. It is not hard to verify that the matching number $\beta(G)$ is 3. On the other hand, the induced matching number $\nu(G)$ is 1.
\begin{figure}[h!]
\centering
\includegraphics[height=2in]{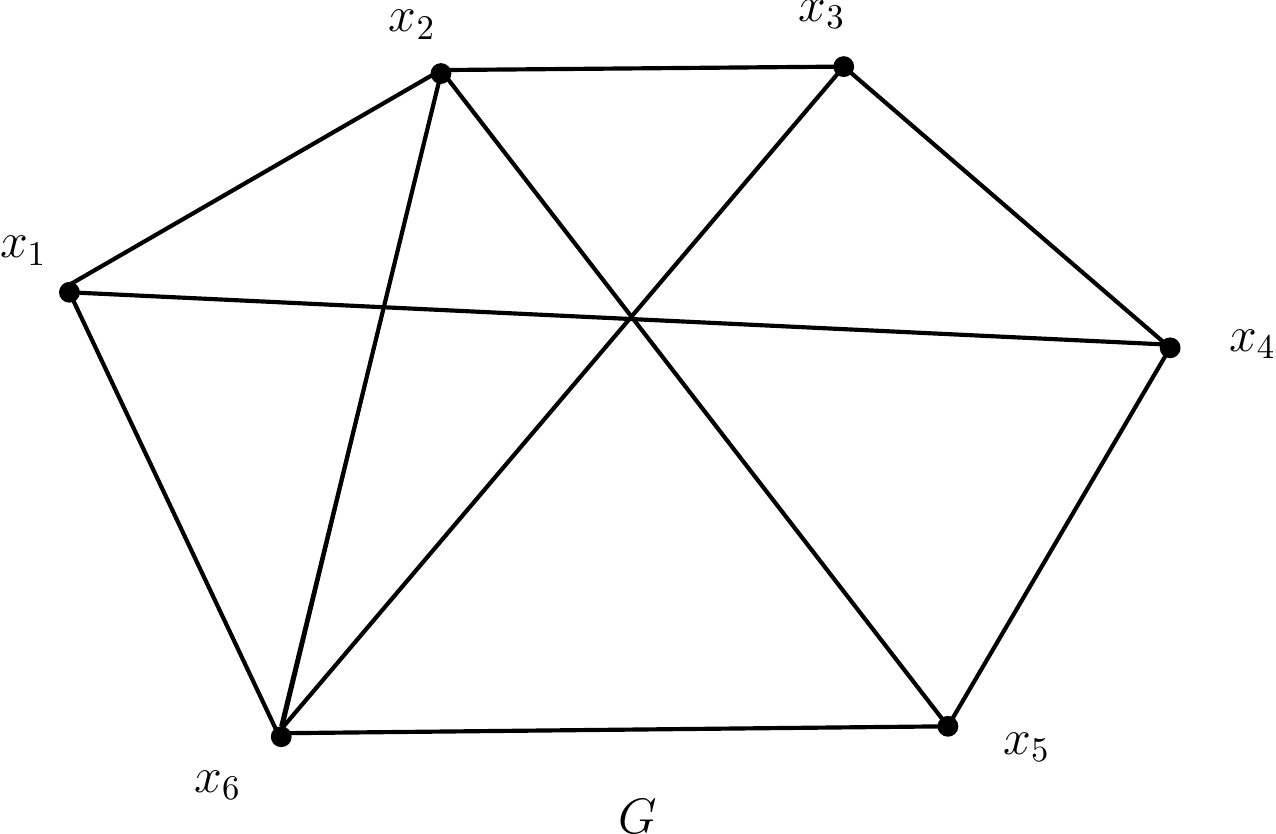}
\caption{A running example.}\label{fig.example}
\end{figure}
\end{example}

\begin{definition} Let $G = (V,E)$ be a graph.
\begin{enumerate}
\item A \emph{walk} in $G$ is an alternating sequence of vertices and edges
$$u_1, e_1, u_2, e_2, \dots, e_{m-1}, u_m,$$
in which $e_i = \{u_i, u_{i+1}\}$. In a simple graph, specifying an edge is the same as specifying its vertices. Thus, we shall omit the edges in $u_1, e_1, \dots, e_{m-1}, u_m$ and use $u_1, \dots, u_m$ to denote such a walk.
\item A path is a \emph{simple} walk; that is, a walk in which each vertex appears exactly once (except possibly the first and the last vertices).
\item A \emph{circuit} is a closed walk, i.e., a walk $u_1, \dots, u_m$, where $u_1 \equiv u_m$.
\item A \emph{cycle} is a closed path. A cycle consisting of $n$ distinct vertices is called an $n$-cycle. When listing the vertices of a cycle, the last vertex (which is the same as the first one) is often omitted from the sequence.
\item A \emph{Hamiltonian path} of $G$ is a path that goes through each vertex of $G$ exactly once.
\item A \emph{Hamiltonian cycle} of $G$ is a cycle that contains all the vertices of $G$ (each appears exactly once). A graph containing a Hamiltonian cycle is called a \emph{Hamiltonian graph}.
\end{enumerate}
\end{definition}

\begin{example} Let $G$ be the graph in Figure \ref{fig.example}. Then $G$ is a Hamiltonian graph; it admits $x_1, \dots, x_6$ as a Hamiltonian cycle. On the other hand, $x_1, x_4, x_5, x_2, x_6, x_3$ forms a Hamiltonian path, but is not a Hamiltonian cycle.
\end{example}

\begin{remark} Let $G$ be a path with vertices $x_1, \dots, x_n$ (in order), where $x_1 \not\equiv x_n$. Then it follows straightforward from the definition that an induced matching with maximal size is given by $\{x_1, x_2\}, \{x_4,x_5\}, \dots$. Thus, $\nu(G) = \lfloor \frac{n+1}{3} \rfloor.$
\end{remark}

\begin{remark} Let $G$ be the $n$-cycle with vertices $x_1, \dots, x_n$ (in order). Observe that if $e = \{x_1,x_2\}$ is an edge in an induced matching $\mathcal{C}$, then edges in $\mathcal{C} \setminus \{e\}$ cannot contain $x_3$ nor $x_n$. Thus, $\mathcal{C} \setminus \{e\}$ is an induced matching of the $(n-4)$-path $x_4, \dots, x_{n-1}$. It then follows that
$$\nu(G) = 1 + \BL \dfrac{n-3}{3} \BR = \BL \dfrac{n}{3} \BR.$$
\end{remark}

Our method of proving one of the main results, Theorem \ref{intro12}, is making use of Banerjee's recent work \cite{Ba}. We recall the following definition and theorem from \cite{Ba}.

\begin{definition} \label{def.evenconnected}
Let $G = (V,E)$ be a graph with edge ideal $I = I(G)$. Two vertices $u$ and $v$ in $G$ are said to be \emph{even-connected} with respect to an $s$-fold product $M = x^{e_1} \dots x^{e_s}$, where $e_1, \dots, e_s$ are edges in $G$, if there is a path $p_0, \dots, p_{2l+1}$, for some $l \ge 1$, in $G$ such that the following conditions hold:
\begin{enumerate}
\item $p_0 \equiv u$ and $p_{2l+1} \equiv v$;
\item for all $0 \le j \le l-1$, $\{p_{2j+1}, p_{2j+2}\} = e_i$ for some $i$; and
\item for all $i$,
$\big| \{j ~|~ \{p_{2j+1}, p_{2j+2}\} = e_i\} \big| \le \big| \{t ~|~ e_t = e_i\} \big|.$
\end{enumerate}
\end{definition}

\begin{theorem}[\protect{\cite[Theorems 6.1 and 6.7]{Ba}}] \label{thm.Ba}
Let $G = (V,E)$ be a graph with edge ideal $I = I(G)$, and let $s \ge 1$ be an integer. Let $M = x^{e_1} \dots x^{e_s}$ be a minimal generator of $I^s$. Then $(I^{s+1}: M)$ is minimally generated by monomials of degree 2, and $uv$ ($u$ and $v$ may be the same) is a minimal generator of $(I^{s+1}:M)$ if and only if either $\{u,v\} \in E$ or $u$ and $v$ are even-connected with respect to $M$.
\end{theorem}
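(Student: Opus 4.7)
The plan is to prove both directions after two preliminaries. First, $(I^{s+1}:M)$ is a monomial ideal since $M$ and $I^{s+1}$ are monomial, so its minimal generators may be taken to be monomials. Second, no minimal monomial generator has degree $1$: a degree-$1$ monomial $x$ with $xM\in I^{s+1}$ is impossible because $\deg(xM)=2s+1<2(s+1)$, while $I^{s+1}$ is generated in degree $2(s+1)$. The heart of the theorem then reduces to a single combinatorial mechanism converting between factorizations $fM=m\cdot x^{e'_1}\cdots x^{e'_{s+1}}$ and edges or even-connecting paths in $G$.

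For the forward direction, the case $\{u,v\}\in E$ is immediate from $I\cdot M\subseteq I\cdot I^s=I^{s+1}$. For the even-connected case, given a path $p_0=u,p_1,\dots,p_{2l+1}=v$ satisfying the three conditions of Definition \ref{def.evenconnected}, I would exhibit an explicit factorization of $uv\cdot M$ into $s+1$ edge-monomials: remove from $M=x^{e_1}\cdots x^{e_s}$ the $l$ edges $\{p_1,p_2\},\{p_3,p_4\},\dots,\{p_{2l-1},p_{2l}\}$ (condition $(3)$ guaranteeing sufficient multiplicity in the multiset $\{e_1,\dots,e_s\}$) and insert the $l+1$ edges $\{u,p_1\},\{p_2,p_3\},\dots,\{p_{2l},v\}$ of $G$. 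A direct monomial comparison shows the resulting product of $s+1$ edge-monomials equals $uv\cdot M$, which therefore lies in $I^{s+1}$.

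For the backward direction, I would treat the claim that minimal generators have degree $\le 2$ and the characterization of degree-$2$ generators via a single path-building argument. Let $f$ be a minimal generator with $\deg f\ge 2$ and fix a factorization $fM=m\cdot x^{e'_1}\cdots x^{e'_{s+1}}$ minimizing $\deg m$. I would first observe that no variable of $f$ divides $m$: if $x\mid f$ and $x\mid m$, then $(f/x)M$ lies in $I^{s+1}$ via $(m/x)\cdot x^{e'_1}\cdots x^{e'_{s+1}}$, contradicting minimality of $f$. Pick any variable $u\mid f$; since $u\nmid m$ but $u$ appears on the right-hand side, $u$ is an endpoint of some $e'_{j_0}$, giving a vertex $p_1$ with $\{u,p_1\}=e'_{j_0}\in E$. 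If $p_1\mid f$, then $up_1$ is a degree-$2$ divisor of $f$ lying in $(I^{s+1}:M)$, forcing $f=up_1$ by minimality. Otherwise $p_1\nmid f$ and $p_1\notin\supp(m)$, so matching multiplicities at $p_1$ on the two sides of $fM=m\prod x^{e'_j}$ forces $p_1$ to be an endpoint of some $e_{i_1}\in\{e_1,\dots,e_s\}$; set $p_2$ to be the other endpoint. Continue alternating: at even steps pick a fresh edge from $\{e'_j\}$ through $p_{2k}$, at odd steps a fresh edge from $\{e_i\}$ through $p_{2k+1}$, deleting used entries from the residual multisets. A counting argument using the rank difference $|\{e'_j\}|-|\{e_i\}|=1$ together with the running vertex-multiplicity balance forces the process to terminate at some $v\mid f$, producing an alternating sequence of odd length whose $M$-edge usage satisfies condition $(3)$ of Definition \ref{def.evenconnected} by the greedy bookkeeping. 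This yields a degree-$2$ element $uv\in(I^{s+1}:M)$ dividing $f$, forcing $f=uv$ and, when $\{u,v\}\notin E$, producing the desired even-connecting path after reducing to a simple path by excising cycles. The main obstacle is ensuring the path terminates correctly at a variable of $f$ rather than stalling or looping, which requires careful tracking of vertex multiplicities across the factorization together with the rank-one deficit between the multisets $\{e'_1,\dots,e'_{s+1}\}$ and $\{e_1,\dots,e_s\}$.
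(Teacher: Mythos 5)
This statement is quoted by the paper from Banerjee \cite[Theorems 6.1 and 6.7]{Ba} and is not proved in the paper at all, so there is no internal proof to compare against; your proposal can only be measured against the cited source, whose strategy (path-building from a factorization of $fM$) you have essentially reconstructed. Your forward direction is complete and correct: the identity $uv\cdot\prod_{j=1}^{l}x^{\{p_{2j-1},p_{2j}\}}=\prod_{j=0}^{l}x^{\{p_{2j},p_{2j+1}\}}$ does show $uvM\in I^{l+1}\cdot I^{s-l}=I^{s+1}$, with condition $(3)$ of Definition \ref{def.evenconnected} ensuring the removed edges form a submultiset of $\{e_1,\dots,e_s\}$. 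The degree-$1$ exclusion by degree count is also fine.

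The backward direction, however, contains a genuine gap, and it sits exactly where you say the ``main obstacle'' is: you never prove that the alternating walk terminates at a variable of $f$ rather than stalling at a vertex from which no fresh edge of the required type (from the residual multiset $\{e'_j\}$ at even steps, from $\{e_i\}$ at odd steps) is available. This is the entire content of Banerjee's Theorem 6.1/6.5, and it occupies several pages of multiplicity bookkeeping there; asserting that ``a counting argument \dots forces the process to terminate'' is not a proof. A smaller but concrete error along the way: from the minimality of $\deg m$ you may conclude that no variable of $f$ divides $m$, but your claim that $p_1\notin\supp(m)$ does not follow, since $p_1$ need not divide $f$. What the multiplicity balance at $p_1$ actually gives (when $p_1\nmid f$) is the inequality $\operatorname{mult}_{p_1}(M)\ge \operatorname{mult}_{p_1}(m)+1\ge 1$, which is what you need to find an edge $e_{i_1}$ of $M$ through $p_1$; the stronger statement $p_1\notin\supp(m)$ may fail. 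Finally, the terminal case $p_{2k+1}=u$ (which produces generators $u^2$, i.e., $u$ even-connected to itself, the case exploited later in Lemma \ref{lem.square}) needs separate handling, since $up_{2k+1}\mid f$ then requires $u^2\mid f$. As written, the proposal is a correct outline of the right argument with its hardest step left unexecuted.
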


\begin{example} Let $G$ be the graph in Figure \ref{fig.example} and let $I = I(G) \subseteq k[x_1, \dots, x_6]$. Let $e = \{x_2, x_6\}$. Then $x_3$ and $x_5$ is even-connected with respect to $M = x^e = x_2x_6$. The path $p_0, \dots, p_3$ as in Definition \ref{def.evenconnected} can be chosen to be $x_3,x_2,x_6,x_5$. In particular, $x_3x_5 \in (I^2 : M)$ by Theorem \ref{thm.Ba}.
\end{example}

%%%%%%%%%%%%%%%%%%%%%%%%%%%%%%%%%%%%%%%%%%%%%%%%%%%%%%%%%%%%%%%%%%%%%%%%%%%%%%

\section{Graphs with Hamiltonian paths and Hamiltonian cycles} \label{sec.Hamilton}

In this section, we provide new bounds for the regularity of the edge ideal of a graph that contains a Hamiltonian path or a Hamiltonian cycle. These bounds are interesting on their own, but they will also be used later in the proofs of our main results.

\begin{theorem} \label{thm.Hamiltonpath}
Let $G$ be a graph on $n$ vertices. Assume that $G$ contains a Hamiltonian path. Then
$$\reg(G) \le \Big\lfloor \dfrac{n+1}{3} \Big\rfloor + 1.$$
\end{theorem}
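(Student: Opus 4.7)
The plan is to argue by strong induction on the number of vertices $n$, combined with an inner induction on $\deg_G(x_1)$, where $x_1, x_2, \ldots, x_n$ denotes a fixed Hamiltonian path of $G$. The base cases $n \le 3$ follow by direct computation.

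For the inductive step when $\deg_G(x_1) \ge 2$, I would pick any chord $e = \{x_1, x_j\}$ with $j \ge 3$ and apply Theorem \ref{thm.induction}(3) to obtain $\reg(G) \le \max\{2, \reg(G \setminus e), \reg(G_e) + 1\}$. The graph $G \setminus e$ retains the same Hamiltonian path but with $\deg_{G \setminus e}(x_1) = \deg_G(x_1) - 1$, so the inner induction gives $\reg(G \setminus e) \le \lfloor(n+1)/3\rfloor + 1$. For $G_e = G \setminus (N[x_1] \cup N[x_j])$, the vertices $x_1, x_2, x_{j-1}, x_j, x_{j+1}$ are all deleted (with the obvious convention when $j = 3$ or $j = n$), so the Hamiltonian path restricted to $V(G_e)$ breaks into at most two consecutive sub-paths of total size at most $n - 5$. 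Provided no chord of $G_e$ joins the two sub-paths, $G_e$ is a disjoint union and the K\"unneth-type identity $\reg(I(H_1) + I(H_2)) = \reg(I(H_1)) + \reg(I(H_2)) - 1$ for graphs on disjoint vertex sets, combined with the outer induction on each piece, delivers $\reg(G_e) \le \lfloor(n-3)/3\rfloor + 1$, which a short arithmetic check shows is at most $\lfloor(n+1)/3\rfloor$ in every residue class of $n \bmod 3$. If inter-piece chords do appear in $G_e$, a tertiary induction removes them one at a time by another application of Theorem \ref{thm.induction}(3).

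For the inductive step when $\deg_G(x_1) = 1$, I would first apply Theorem \ref{thm.induction}(2) with $x = x_1$: the subgraphs $G \setminus x_1$ and $G \setminus N[x_1] = G \setminus \{x_1, x_2\}$ carry Hamiltonian paths on $n - 1$ and $n - 2$ vertices respectively, so the outer induction succeeds as long as $\lfloor(n-1)/3\rfloor + 1 \le \lfloor(n+1)/3\rfloor$, a condition that holds precisely when $n \not\equiv 1 \pmod 3$. For the residue $n \equiv 1 \pmod 3$, I would instead apply Theorem \ref{thm.induction}(2) with $x = x_2$: the vertex $x_1$ becomes isolated in $G \setminus x_2$ and may be dropped by Remark \ref{rmk.flat}, leaving a Hamiltonian-path subgraph on $n - 2$ vertices, while $G \setminus N[x_2]$ is analyzed by splitting on $\deg(x_2)$, with an inner induction on $\deg(x_2)$ eliminating a chord of $x_2$ via Theorem \ref{thm.induction}(3) whenever $\deg(x_2) \ge 3$.

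The main obstacle is the control of $\reg(G_e)$ in the first case when the Hamiltonian path fragments into pieces that remain joined by chords in $G_e$: the chord-reduction induction has to be carried out while preserving the arithmetic inequality $\sum_i \lfloor(m_i+1)/3\rfloor + 1 \le \lfloor(n+1)/3\rfloor$ at each step, and the boundary cases $j \in \{3, n\}$ together with the three residues of $n \bmod 3$ require separate verification.
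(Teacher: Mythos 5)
Your outer skeleton (induction on $n$ with an inner induction on $\deg_G(x_1)$, deleting a chord $e=\{x_1,x_j\}$ via Theorem \ref{thm.induction}(3)) matches the paper's, and your $d=1$ analysis, once you switch to $x=x_2$, is essentially the paper's argument. But there is a genuine gap at the step you yourself flag as the main obstacle: bounding $\reg(G_e)$ when $5 \le j \le n-1$ and the Hamiltonian path fragments into two pieces that remain joined by chords. Your proposed ``tertiary induction'' that removes inter-piece chords one at a time via Theorem \ref{thm.induction}(3) does not close arithmetically. Each such application charges $+1$ against the target in the branch $\reg((G_e)_f)+1$, while deleting as few as $4$ further vertices (when the endpoints of $f$ are endpoints of their pieces) and possibly increasing the number of pieces. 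Concretely, if the pieces have $m_1$ and $m_2$ vertices, the step needs an inequality of the shape $\lfloor (m_1-1)/3\rfloor + \lfloor (m_2-1)/3\rfloor + 1 \le \lfloor (m_1+1)/3\rfloor + \lfloor (m_2+1)/3\rfloor$, which fails when $m_1 \equiv m_2 \equiv 1 \pmod 3$ (e.g.\ $m_1=m_2=4$ gives $3 \not\le 2$); and the weaker aggregate bound $\lfloor (\sum_i m_i + 2)/3\rfloor + 1$, which would survive the step, is simply false for disjoint unions of three or more short paths (three disjoint edges have regularity $4$, not $3$). So the lemma your recursion would need is either false or does not propagate, and no amount of case analysis on residues repairs this as stated.

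The paper sidesteps the fragmentation entirely with a construction you are missing, and it is worth internalizing: to bound $\reg(G_e)$ one does not need $G_e$ itself to carry a Hamiltonian path, only to sit as an \emph{induced} subgraph of some graph that does (Theorem \ref{thm.induction}(1)). For $t \ge 5$ the paper takes $H$ to be the induced subgraph of $G$ on $V_G \setminus \{x_1,x_2,x_t\}$ --- crucially retaining $x_{t-1}$ and $x_{t+1}$ --- and forms $H'$ by adding the edge $\{x_{t-1},x_{t+1}\}$. Then $H'$ has the Hamiltonian path $x_3,\dots,x_{t-1},x_{t+1},\dots,x_n$ on $n-3$ vertices, so induction on $n$ gives $\reg(H') \le \lfloor (n+1)/3\rfloor$; and since $x_{t-1},x_{t+1} \in N[e]$ are not vertices of $G_e$, the added edge does not disturb the fact that $G_e$ is an induced subgraph of $H'$, whence $\reg(G_e) \le \reg(H') \le \lfloor (n+1)/3\rfloor$ in one stroke. (The same ``embed into a Hamiltonian-path graph'' observation is what lets the paper dispatch $G\setminus N[x_2]$ in the $d=1$ case and the subcases $t\in\{3,4\}$ and $t=n$ without any splitting on $\deg(x_2)$.) Your proof needs this idea, or an equally effective substitute, before the chord case can be considered complete.
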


\begin{proof} We use induction on the number of vertices in $G$. The result can be verified trivially for $n \le 2$. Assume now that $n > 2$. Without loss of generality, suppose that $x_1,x_2, \dots, x_n$ forms a Hamiltonian path in $G$. Let $d = \deg_G(x_1)$ be the degree of $x_1$ in $G$. Our proof proceeds by induction on $d$.

Consider the case when $d = 1$, i.e., $x_1$ is a leaf of $G$. Let $H = G \setminus \{x_1, x_2\}$. It can be seen that $x_3, \dots, x_n$ is a Hamiltonian path in $H$. Also, $x_1$ is an isolated vertex in $G \setminus x_2$. Thus, $\reg(G \setminus x_2) = \reg(H)$. Therefore, by the induction hypothesis on the number of vertices, we get
\begin{align}
\reg(G \setminus x_2) = \reg(H) & \le \Big\lfloor \dfrac{(n-2) + 1}{3} \Big\rfloor + 1 \le \Big\lfloor \dfrac{n+1}{3} \Big\rfloor + 1. \label{eq.d11}
\end{align}

Let $K$ be the induced subgraph of $G$ over the vertices $\{x_4, \dots, x_n\}$. It is easy to see that $G \setminus N[x_2]$ is an induced subgraph of $K$. It follows from Theorem \ref{thm.induction} that $\reg(G \setminus N[x_2]) \le \reg(K)$. Moreover, $K$ has a Hamiltonian path $x_4, \dots, x_n$. Thus, by induction on $n$, we get
\begin{align}
\reg(K) & \le \BL \dfrac{(n-3)+1}{3} \BR + 1 = \BL \dfrac{n+1}{3} \BR. \label{eq.d12}
\end{align}
It then follows from Theorem \ref{thm.induction}, together with (\ref{eq.d11}) and (\ref{eq.d12}), that
$$\reg(G) \le \max\{\reg(G \setminus x_2), \reg(G \setminus N[x_2]) + 1\} \le \BL \dfrac{n+1}{3} \BR + 1.$$

We may suppose now that $d \ge 2$, i.e., $x_1$ is connected to some $x_i$s with $i > 2$. Let $3 \le t \le n$ be the smallest integer which that $\{x_1, x_t\}$ is an edge in $G$. %We shall consider different possibilities for $t$.

\noindent{\it Case 1:} $t = n$. In this case, $\deg_G(x_1) = 2$ and $G$ is a Hamiltonian graph. Observe that $G \setminus x_1$ contains a Hamiltonian path $x_2, \dots, x_n$, and so, by induction on $n$, we have
$$\reg(G \setminus x_1) \le \BL \dfrac{(n-1) + 1}{3} \BR + 1 \le \BL \dfrac{n+1}{3} \BR + 1.$$

Let $K$ be the induced subgraph of $G$ over the vertices $\{x_3, \dots, x_{n-1}\}$. Observe further that $K$ contains a Hamiltonian path $x_3, \dots, x_{n-1}$ and $G \setminus N[x_1]$ is an induced subgraph of $K$. Thus, by induction on $n$ and Theorem \ref{thm.induction}, we have
$$\reg(G \setminus N[x_1]) \le \reg(K) \le \BL \dfrac{(n-3)+1}{3} \BR + 1 = \BL \dfrac{n+1}{3} \BR.$$
It follows again from Theorem \ref{thm.induction} that
$$\reg(G) \le \max\{\reg(G \setminus x_1), \reg(G \setminus N[x_1]) + 1\} \le \BL \dfrac{n+1}{3} \BR + 1.$$

\noindent{\it Case 2:} $3 \le t < n$. Let $Q$ be the subgraph obtained from $G$ by removing the edge $e=\{x_1, x_t\}$. Then $Q$ has a Hamiltonian path $x_1, \dots, x_n$ and $\deg_Q(x_1) < \deg_G(x_1) = d$. Thus, by the induction hypothesis on $d$, we get
\begin{align}
\reg(Q) & \le \BL \dfrac{n+1}{3} \BR + 1. \label{eq.Q1}
\end{align}

Recall that $G_e$ is the subgraph obtained from $G$ by removing the vertices in $N[e]$. It follows from Theorem \ref{thm.induction} that
$$\reg(G) \le \max\{\reg(Q), \reg(G_e)+1\}.$$
Thus, in light of (\ref{eq.Q1}), to prove that $\reg(G) \le \BL \dfrac{n+1}{3} \BR + 1$, it remains to show that
\begin{align}
\reg(G_e) & \le \BL \dfrac{n+1}{3} \BR. \label{eq.Q2}
\end{align}

Indeed, if $3 \le t \le 4$ then $G_e$ is an induced subgraph of $H$, where $H$ is the induced subgraph of $G$ over the vertices $\{x_{t+1}, \dots, x_n\}$, which has a Hamiltonian path; and thus, by induction on the number of vertices and Theorem \ref{thm.induction}, we have
$$\reg(G_e) \le \reg(H) \le \BL \dfrac{(n-t)+1}{3} \BR + 1 \le \BL \dfrac{n+1}{3} \BR,$$
and (\ref{eq.Q2}) holds.

\begin{figure}[h!]
\centering
\includegraphics[height=2in]{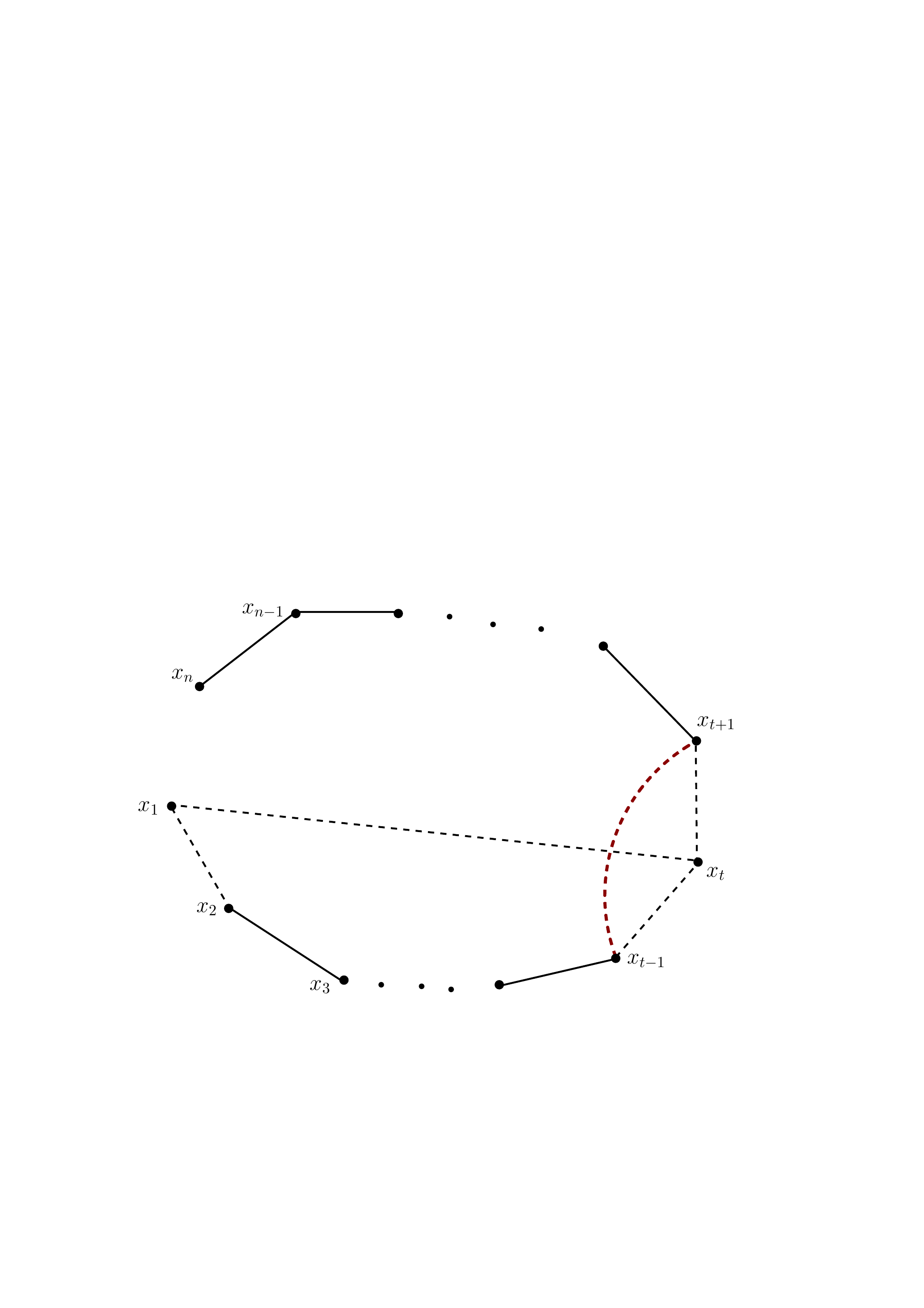}
\caption{Construction of $H'$ in the proof of Theorem \ref{thm.Hamiltonpath}.}\label{fig.path}
\end{figure}

If, on the other hand, $t \ge 5$ then let $H$ be the induced subgraph of $G$ over the vertices $V_G \setminus \{x_1, x_2, x_t\}$, and let $H'$ be the graph obtained from $H$ by connecting $x_{t-1}$ and $x_{t+1}$ if this edge is not already in $G$ (see Figure \ref{fig.path}). Clearly, $H'$ has a Hamiltonian path $x_3, \dots, x_{t-1}, x_{t+1}, \dots, x_n$. Thus, by induction on the number of vertices, we get
$$\reg(H') \le \BL \dfrac{(n-3)+1}{3} \BR + 1 = \BL \dfrac{n+1}{3} \BR.$$
Moreover, it can be seen that $G_e$ is an induced subgraph of $H'$ (since $x_{t-1}$ and $x_{t+1}$ are in $N[e]$, they are not in $G_e$), which implies that $\reg(G_e) \le \reg(H')$, and (\ref{eq.Q2}) follows.
\end{proof}

When $G$ is a Hamiltonian graph, i.e., $G$ contains not just a Hamiltonian path but a Hamiltonian cycle, then we can slightly strengthen the bound in Theorem \ref{thm.Hamiltonpath}.

\begin{theorem} \label{thm.Hamiltoncycle}
Let $G$ be a Hamiltonian graph on $n$ vertices. Assume that $x_1, \dots, x_n, x_1$ forms a Hamiltonian cycle in $G$, and there exists a value $t$ so that $\{x_{t-1}, x_{t+2}\}$ is an edge in $G$. Then
$$\reg(G) \le \BL \dfrac{n}{3} \BR + 1.$$
\end{theorem}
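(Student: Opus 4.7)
The plan is to follow the inductive strategy of Theorem \ref{thm.Hamiltonpath}, using the short chord to save one unit in the regularity bound precisely where it matters, namely $n\equiv 2\pmod 3$. When $n\not\equiv 2\pmod 3$ we have $\lfloor n/3\rfloor+1=\lfloor (n+1)/3\rfloor+1$, so the desired bound is immediate from Theorem \ref{thm.Hamiltonpath}; the real content lies in the case $n\equiv 2\pmod 3$. We proceed by induction on $n$, with the small base cases checked directly.

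After a cyclic relabeling we may assume $t=2$, so the chord is $\{x_1,x_4\}$. Apply Theorem \ref{thm.induction}(2) at the chord endpoint $v=x_1$ to obtain
\[
\reg(G)\ \le\ \max\bigl\{\reg(G\setminus x_1),\ \reg(G\setminus N[x_1])+1\bigr\}.
\]
Since $G\setminus x_1$ carries the Hamiltonian path $x_2,x_3,\dots,x_n$ on $n-1$ vertices, Theorem \ref{thm.Hamiltonpath} yields $\reg(G\setminus x_1)\le \lfloor n/3\rfloor+1$, which is already the bound we want. The new ingredient is that the short chord forces $x_4\in N[x_1]$, hence $N[x_1]\supseteq\{x_1,x_2,x_4,x_n\}$, and so $G\setminus N[x_1]$ is an induced subgraph of $G[\{x_3,x_5,x_6,\dots,x_{n-1}\}]$.

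The crux is to establish $\reg(G\setminus N[x_1])\le \lfloor n/3\rfloor$. Mimicking the auxiliary-edge construction in Case~2 of the proof of Theorem \ref{thm.Hamiltonpath}, form $H'$ by adjoining the edge $\{x_3,x_5\}$ to $G[\{x_3,x_5,\dots,x_{n-1}\}]$ if absent. Then $H'$ has the Hamiltonian path $x_3,x_5,x_6,\dots,x_{n-1}$ on $n-4$ vertices, so Theorem \ref{thm.Hamiltonpath} gives $\reg(H')\le \lfloor (n-3)/3\rfloor+1=\lfloor n/3\rfloor$ when $n\equiv 2\pmod 3$. Whenever at least one of $x_3,x_5$ lies in $N[x_1]$, the adjoined edge vanishes in the induced subgraph of $H'$ on $V(G\setminus N[x_1])$, making $G\setminus N[x_1]$ an induced subgraph of $H'$, and Theorem \ref{thm.induction}(1) closes the bound.

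The main obstacle is the corner case in which $x_3,x_5\in V(G\setminus N[x_1])$ and $\{x_3,x_5\}\notin E(G)$, so the adjoined edge persists and the direct induced-subgraph comparison fails. The plan is to resolve this via a nested induction on $\deg_G(x_1)$, in the spirit of the induction on $d$ in Theorem \ref{thm.Hamiltonpath}: either we peel off an additional edge at $x_1$ via Theorem \ref{thm.induction}(3), strictly decreasing $\deg_G(x_1)$ while preserving the Hamiltonian cycle and short chord for the inner hypothesis, or we transfer the argument to the symmetric chord endpoint $x_4$, where an analogous auxiliary edge (e.g.\ $\{x_2,x_6\}$) is available. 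Combining all cases yields $\reg(G)\le \lfloor n/3\rfloor+1$.
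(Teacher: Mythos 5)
Your reduction to the case $n\equiv 2\pmod 3$ is correct (for $n\not\equiv 2\pmod 3$ one has $\lfloor n/3\rfloor=\lfloor (n+1)/3\rfloor$, so Theorem \ref{thm.Hamiltonpath} already suffices), and the bound $\reg(G\setminus x_1)\le \lfloor n/3\rfloor+1$ via the Hamiltonian path on $x_2,\dots,x_n$ is fine. But the proof is not complete: the ``corner case'' you flag is a genuine gap, and neither of your two proposed escapes closes it. The structural reason is that your auxiliary edge $\{x_3,x_5\}$ has endpoints that are \emph{not} guaranteed to lie in $N[x_1]$, so the adjoined edge can survive into the comparison graph. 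Concretely, take $n\equiv 2\pmod 3$ with $N(x_1)=\{x_2,x_4,x_n\}$ and $N(x_4)=\{x_1,x_3,x_5\}$, with $\{x_3,x_5\},\{x_2,x_6\}\notin E$, but with additional chords elsewhere, say $\{x_3,x_9\}\in E$. Then: (a) there is no further edge at $x_1$ to peel, so the nested induction on $\deg_G(x_1)$ has nothing to do; (b) the symmetric argument at $x_4$ fails for the same reason; and (c) $G\setminus N[x_1]=G[\{x_3,x_5,\dots,x_{n-1}\}]$ contains $x_3$ attached to the middle of the path $x_5,\dots,x_{n-1}$ by a chord, so it has no Hamiltonian path and is not an induced subgraph of your $H'$, leaving you with no tool to prove $\reg(G\setminus N[x_1])\le \lfloor n/3\rfloor$. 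Your sketch ends by asserting ``combining all cases'' without an argument for exactly this residual case.

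For contrast, the paper avoids this trap by inducting on $|E|$ and working with an \emph{edge} rather than a vertex: in the base case ($G$ is the cycle plus the single chord $\{x_{t-1},x_{t+2}\}$) it computes $\reg(G\setminus x_{t+2})$ and $\reg(G\setminus N[x_{t+2}])$ directly from the path formula, and in the inductive step it deletes a second chord $e=\{x_1,x_l\}$, so that $Q=G\setminus e$ retains both the Hamiltonian cycle and the short chord (making the induction go through), while the auxiliary edge used to bound $\reg(G_e)$ is $\{x_{l-1},x_{l+1}\}$ --- whose endpoints automatically lie in $N[x_l]\subseteq N[e]$ and hence never persist in $G_e$. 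If you want to salvage your vertex-deletion approach, you would need to replace the auxiliary edge $\{x_3,x_5\}$ by a construction whose added edge is guaranteed to disappear in $G\setminus N[x_1]$, or restructure the induction so that the problematic configuration cannot occur; as written, the argument does not establish the theorem.
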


\begin{proof} It is clear from the assumption that $|E| \ge n+1$. We shall use induction on $|E|$.

Consider the base case when $|E| = n+1$. That is, $G$ consists of a cycle $x_1, \dots, x_n, x_1$ and exactly one edge $\{x_{t-1}, x_{t+2}\}$. In this case, $G \setminus x_{t+2}$ is a path of $(n-1)$ vertices. Thus, it follows from \cite[Corollary 5.4]{BHO} (see also \cite[Theorem 7.7.34]{J}) that
$$\reg(G \setminus x_{t+2}) = \BL \dfrac{(n-1)+1}{3} \BR + 1 = \BL \dfrac{n}{3} \BR + 1.$$
On the other hand, $G \setminus N[x_{t+2}]$ consists of an isolated vertex $x_t$ and a path of $(n-5)$ vertices. Therefore, it also follows from \cite[Corollary 5.4]{BHO} (and \cite[Theorem 7.7.34]{J}) that
$$\reg(G \setminus N[x_{t+2}]) = \BL \dfrac{(n-5) + 1}{3} \BR + 1 = \BL \dfrac{n-1}{3} \BR \le \BL \dfrac{n}{3} \BR.$$
Hence, by Theorem \ref{thm.induction}, we have
$$\reg(G) \le \BL \dfrac{n}{3} \BR + 1.$$

Suppose now that $|E| > n+1$. Then the cycle $x_1, \dots, x_n, x_1$ in $G$ contains another chord beside $x_{t-1}x_{t+2}$. We are in the situation where $G$ has a Hamiltonian cycle with at least two chords. By re-indexing the vertices of $G$ and the given chord $x_{t-1}x_{t+2}$ if necessary (with a different value of $t$), we may assume without loss of generality that $e=x_1x_l$ is another chord that is different from $x_{t-1}x_{t+2}$. Let $Q = G \setminus e$ be obtained by deleting the edge $e$ from $G$. Clearly, $Q$ contains a Hamiltonian cycle $x_1, \dots, x_n, x_1$ and the chord $x_{t-1}x_{t+2}$. Thus, by induction on the number of edges, we get
$$\reg(Q) \le \BL \dfrac{n}{3} \BR + 1.$$

\begin{figure}[h!]
\centering
\includegraphics[height=2in]{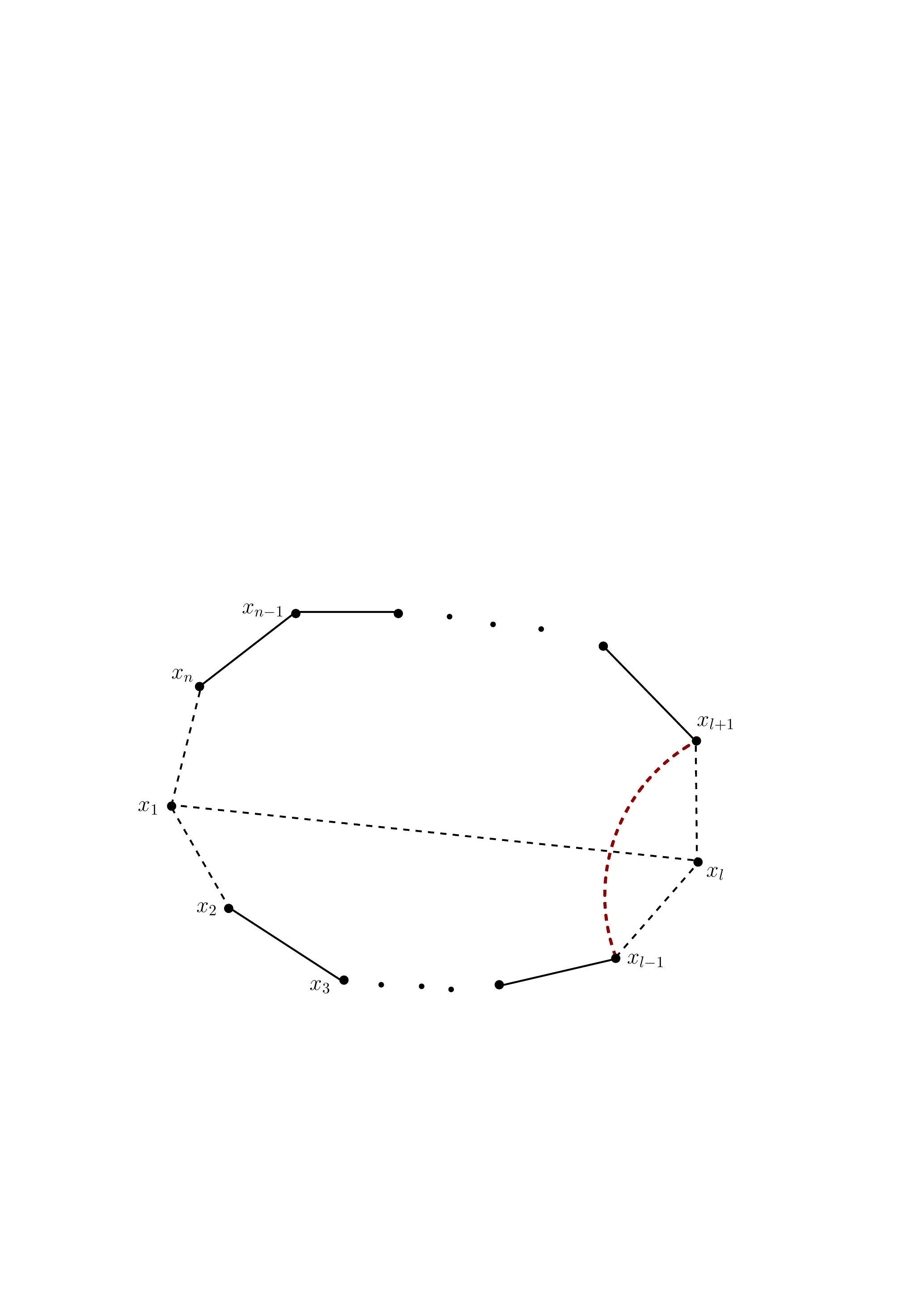}
\caption{Construction of $H'$ in the proof of Theorem \ref{thm.Hamiltoncycle}.}\label{fig.cycle}
\end{figure}

Let $H$ be the induced subgraph of $G$ over the vertices $V_G \setminus \{x_1, x_2, x_l, x_n\}$, and let $H'$ be the graph obtained from $H$ by connecting $x_{l-1}$ and $x_{l+1}$ if this edge is not already in $G$ (see Figure \ref{fig.cycle}). Then $H'$ has a Hamiltonian path $x_3, \dots, x_{l-1}, x_{l+1}, \dots, x_{n-1}$. Thus, by Theorem \ref{thm.Hamiltonpath}, we get
$$\reg(H') \le \BL \dfrac{(n-4) + 1}{3} \BR + 1 = \BL \dfrac{n}{3} \BR.$$
Moreover, $G_e$ is an induced subgraph of $H'$ (since $x_{l-1}$ and $x_{l+1}$ are in $N[e]$, they are not in $G_e$), so it follows from Theorem \ref{thm.induction} that
$$\reg(G_e) \le \reg(H') \le \BL \dfrac{n}{3} \BR.$$
The conclusion that $\reg(G) \le \lfloor \frac{n}{3} \rfloor + 1$ now also follows from Theorem \ref{thm.induction}, and the assertion is proved.
\end{proof}

%%%%%%%%%%%%%%%%%%%%%%%%%%%%%%%%%%%%%%%%%%%%%%%%%%%%%%%%%%%%%%%%%%%%%%%%%%%%%%

\section{Regularity of powers of forests} \label{sec.forest}

In this section, we prove our first main result in explicitly computing the regularity of $I(G)^s$, for all $s \ge 1$, when $G$ is a forest. Our result, Theorem \ref{T2}, shows that in this case, $\reg(I(G)^s)$ is a linear function starting from $s_0 = 1$ and the constant $b$ is given by the induced matching number of $G$ minus 1. To accomplish this, we shall first establish the general bound for $\reg(I(G)^s)$, where $G$ is any graph, as stated in (\ref{eq.intro}).

We start by recalling the notion of upper-Koszul simplicial complexes associated to monomial ideals, whose reduced homology groups can be used to compute graded Betti numbers of these ideals.

\begin{definition} Let $I \subseteq R = k[x_1, \dots, x_n]$ be a monomial ideal and let $\alb=(\alpha_1,\ldots,\alpha_n) \in \NN^n$ be a $\NN^n$-graded degree. The \emph{upper-Koszul simplicial complex} associated to $I$ at degree $\alb$, denoted by $K^{\alb}(I)$, is the simplicial complex over $V = \{x_1, \dots, x_n\}$ whose faces are:
$$\Big\{W \subseteq V ~\Big|~ \dfrac{x^\alb}{\prod\limits_{u \in W} u} \in I\Big\}.$$
\end{definition}

Given a monomial ideal $I \subseteq R$, its $\NN^n$-graded Betti numbers are given by the following formula of Hochster (see \cite[Theorem 1.34]{MS}):
\begin{equation}\label{EQ1}
\beta_{i,\alb}(I) = \dim_k \h_{i-1}(K^{\alb}(I);k) \text{ for } i \ge 0 \text{ and } \alb \in \NN^n.
\end{equation}

Using (\ref{EQ1}) we obtain the following lemma, which is an analogue of \cite[Proposition 4.1.1]{J} for Betti numbers of powers of edge ideals.

\begin{lemma}\label{L1}
Let $G$ be a graph and let $H$ be an induced subgraph of $G$. Then for any $s \ge 1$ and any $i,j \ge 0$, we have
$$\beta_{i,j}(I(H)^s) \le \beta_{i,j}(I(G)^s).$$
\end{lemma}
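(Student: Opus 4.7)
The plan is to refine the comparison to the finer $\NN^n$-graded Betti numbers, so that everything is controlled by Hochster's formula (\ref{EQ1}). Recall that $\beta_{i,j}(I) = \sum_{\alb \in \NN^n, |\alb|=j} \beta_{i,\alb}(I)$, so it suffices to show that $\beta_{i,\alb}(I(H)^s) \le \beta_{i,\alb}(I(G)^s)$ for every multidegree $\alb$. By (\ref{EQ1}), this amounts to comparing the reduced homology of the upper-Koszul complexes $K^\alb(I(H)^s)$ and $K^\alb(I(G)^s)$.

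First I would split into cases on the support of $\alb$. If $\alb$ has some coordinate $\alpha_u \ne 0$ for a vertex $u \notin V_H$, then every monomial of $I(H)^s$ lies in $k[V_H]$, so $\beta_{i,\alb}(I(H)^s) = 0$ and the inequality is trivial. Hence assume $\supp(x^\alb) \subseteq V_H$. Then any face $W$ of either upper-Koszul complex is forced to satisfy $W \subseteq \supp(x^\alb) \subseteq V_H$, since otherwise $x^\alb / \prod_{u \in W} u$ is not a monomial. Thus both $K^\alb(I(H)^s)$ and $K^\alb(I(G)^s)$ can be viewed as simplicial complexes on the common vertex set $V_H$.

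Next I would show these two simplicial complexes coincide. Fix $W \subseteq V_H$ with $W \subseteq \supp(x^\alb)$, and set $m = x^\alb / \prod_{u \in W} u$, which is a monomial with $\supp(m) \subseteq V_H$. Clearly $m \in I(H)^s$ implies $m \in I(G)^s$. For the converse, if $m = x^{e_1} \cdots x^{e_s}$ is an $s$-fold product of edges $e_i \in E_G$, then each $e_i$ has both endpoints inside $\supp(m) \subseteq V_H$; since $H$ is an \emph{induced} subgraph of $G$, each $e_i$ lies in $E_H$, whence $m \in I(H)^s$. Therefore $W$ is a face of $K^\alb(I(G)^s)$ if and only if it is a face of $K^\alb(I(H)^s)$, so the two complexes are identical and have the same reduced homology.

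The key step where the argument could fail for a non-induced subgraph is precisely the converse direction above, where inducedness is used to pull back a factorization in $I(G)^s$ to one in $I(H)^s$; this is really the only substantive point in the proof, and it is short. Combining the two cases and summing $\beta_{i,\alb}$ over all $\alb$ with $|\alb| = j$ then yields $\beta_{i,j}(I(H)^s) \le \beta_{i,j}(I(G)^s)$, as required.
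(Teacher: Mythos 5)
Your proof is correct and follows essentially the same route as the paper: reduce to multigraded Betti numbers via Hochster's formula, observe that $K^{\alb}(I(H)^s) = K^{\alb}(I(G)^s)$ whenever $\supp(\alb) \subseteq V_H$ (using that $H$ is induced), and sum over $\alb$. You actually spell out the verification of that equality of upper-Koszul complexes, which the paper states as an observation; the only nitpick is that $m \in I(G)^s$ means $m$ is \emph{divisible by} an $s$-fold edge product rather than equal to one, but the support argument goes through unchanged.
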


\begin{proof} For an $\NN^n$-graded degree $\alb=(\alpha_1,\dots,\alpha_n)$, let $\supp(\alb) = \{x_i ~|~ \alpha_i \not= 0\}$ be the support of $\alb$. Observe that since $H$ is an induced subgraph of $G$, if $\supp(\alb) \subseteq V_H$ then $K^{\alb}(I(H)^s) = K^{\alb}(I(G)^s)$. Thus, it follows from $(\ref{EQ1})$ that
$$\beta_{i,\alb}(I(H)) = \dim_k \h_{i-1}(K^{\alb}(I(H)^s); k) = \dim_k \h_{i-1}(K^{\alb}(I(G)^s); k) = \beta_{i,\alb}(I(G)^s).$$
Hence,
\begin{align*} \beta_{i,j}(I(H)^s) &=\sum_{\alb\in\NN^n, \ \supp(\alb)\subseteq V_H, |\alb| = j} \beta_{i,\alb}(I(H)^s)=\sum_{\alb\in\NN^n, \ \supp(\alb)\subseteq V_H, |\alb| = j} \beta_{i,\alb}(I(G)^s)\\
&\le \sum_{\alb\in\NN^n, \ |\alb| = j} \beta_{i,\alb}(I(G)^s) = \beta_{i,j}(I(G)^s).
\end{align*}
\end{proof}

\begin{corollary}\label{C1} Let $G$ be a graph and let $H$ be an induced subgraph of $G$. Then, for all $s \ge 1$,
$$\reg I(H)^s \le \reg I(G)^s.$$
\end{corollary}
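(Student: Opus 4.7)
The corollary is an immediate consequence of the preceding Lemma \ref{L1}, so the proof plan is very short. The regularity of a finitely generated graded module $T$ is defined by
\[
\reg(T) = \max\{j - i \mid \beta_{i,j}(T) \ne 0\},
\]
so to bound $\reg I(H)^s$ from above by $\reg I(G)^s$ it suffices to show that every pair $(i,j)$ for which $\beta_{i,j}(I(H)^s) \ne 0$ satisfies $j - i \le \reg I(G)^s$.

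The plan is to invoke Lemma \ref{L1}, which gives the pointwise inequality $\beta_{i,j}(I(H)^s) \le \beta_{i,j}(I(G)^s)$ for all $i, j \ge 0$. Consequently, whenever $\beta_{i,j}(I(H)^s)$ is nonzero, so is $\beta_{i,j}(I(G)^s)$, and hence $j - i \le \reg I(G)^s$ by the definition above. Taking the maximum over all such pairs $(i,j)$ yields $\reg I(H)^s \le \reg I(G)^s$, which is exactly the desired inequality.

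There is essentially no obstacle here; the only content of the argument lies in Lemma \ref{L1}, which has already been established via the Hochster formula and the observation that the upper-Koszul complexes $K^{\alb}(I(H)^s)$ and $K^{\alb}(I(G)^s)$ coincide for multidegrees $\alb$ supported on $V_H$. The corollary then follows by a one-line appeal to the Betti-number definition of regularity.
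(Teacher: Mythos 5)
Your proof is correct and matches the paper's (implicit) argument exactly: the paper derives Corollary \ref{C1} directly from the pointwise Betti number inequality of Lemma \ref{L1}, just as you do. Nothing further is needed.
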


The following lemma is probably well-known. We include the proof for completeness.

\begin{lemma}\label{L2}
Let $F_1,\dots,F_r$ be a regular sequence of homogeneous polynomials in $R$ with $\deg F_1=\cdots=\deg F_r = d$. Let $I=(F_1,\dots,F_r)$. Then for all $s \ge 1$, we have
$$\reg(I^s) = ds + (d-1)(r-1).$$
\end{lemma}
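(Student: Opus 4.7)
My plan is to prove $\reg(I^s) = ds + (d-1)(r-1)$ by induction on $s$, built around the short exact sequence
$$0 \to I^{s+1} \to I^s \to I^s/I^{s+1} \to 0$$
together with a direct computation of $\reg(I^s/I^{s+1})$. For the base case $s = 1$, the Koszul complex on $F_1, \dots, F_r$ is the minimal graded free resolution of $R/I$, with terms $R(-id)^{\binom{r}{i}}$ for $0 \le i \le r$; hence $\reg(R/I) = rd - r = r(d-1)$ and therefore $\reg(I) = r(d-1) + 1 = d + (d-1)(r-1)$.

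The key input for the induction is the following description of $I^s/I^{s+1}$. Since $F_1, \dots, F_r$ form a regular sequence, the classical fact that the Rees algebra of a complete intersection equals its symmetric algebra yields
$$I^s/I^{s+1} \cong \operatorname{Sym}^s_{R/I}(I/I^2),$$
and $I/I^2$ is a free $R/I$-module of rank $r$ with generators in degree $d$. Thus $I^s/I^{s+1}$ is free over $R/I$ of rank $\binom{s+r-1}{r-1}$ with all generators in degree $sd$, and its minimal graded $R$-free resolution is $\binom{s+r-1}{r-1}$ shifted copies of the Koszul complex, giving $\reg(I^s/I^{s+1}) = sd + r(d-1)$.

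For the inductive step, assume the formula for $I^s$; then $\reg(I^s/I^{s+1}) - \reg(I^s) = d - 1$. When $d \ge 2$, applying the two standard regularity inequalities to the exact sequence above gives
$$\reg(I^{s+1}) \le \max\bigl\{\reg(I^s),\ \reg(I^s/I^{s+1}) + 1\bigr\} = (s+1)d + (d-1)(r-1),$$
and, since $\reg(I^s/I^{s+1}) > \reg(I^s)$, the inequality $\reg(I^s/I^{s+1}) \le \max\bigl\{\reg(I^{s+1}) - 1,\ \reg(I^s)\bigr\}$ forces $\reg(I^{s+1}) \ge \reg(I^s/I^{s+1}) + 1 = (s+1)d + (d-1)(r-1)$. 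The degenerate case $d = 1$ is immediate because $I$ reduces to an ideal of linearly independent linear forms and $\reg(I^s) = s$ directly. The only delicate point in this scheme is the structure theorem for $I^s/I^{s+1}$ when $I$ is generated by a regular sequence of forms of a common degree, which I would invoke as a standard fact rather than re-derive; once it is in hand, a single symmetric appeal to the long exact sequence of $\operatorname{Tor}$ produces both the upper and lower bounds.
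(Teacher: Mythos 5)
Your proof is correct, but it takes a genuinely different route from the paper's. The paper runs a double induction on $r$ and $s$: setting $J=(F_1,\dots,F_{r-1})$, it uses the decomposition $I^s = J^s + F_rI^{s-1}$ and the fact that $F_r$ is a nonzerodivisor on $R/J$ to build the short exact sequence $0 \to J^s(-d) \to I^{s-1}(-d)\oplus J^s \to I^s \to 0$, from which both the upper and lower bounds follow by the inductive hypotheses on $J^s$ and $I^{s-1}$. You instead induct only on $s$ and filter by $0 \to I^{s+1} \to I^s \to I^s/I^{s+1} \to 0$, importing the structure theorem $I^s/I^{s+1} \cong \operatorname{Sym}^s_{R/I}(I/I^2) \cong (R/I)(-sd)^{\binom{s+r-1}{r-1}}$, which is where the regular-sequence hypothesis enters for you. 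Your computation $\reg(I^s/I^{s+1}) = sd + r(d-1)$ is right; the upper bound via $\reg(I^{s+1}) \le \max\{\reg(I^s),\ \reg(I^s/I^{s+1})+1\}$ is right; and extracting the lower bound from $\reg(I^s/I^{s+1}) \le \max\{\reg(I^{s+1})-1,\ \reg(I^s)\}$ together with the strict inequality $\reg(I^s/I^{s+1}) > \reg(I^s)$ (valid for $d \ge 2$) is the correct move, with the degenerate case $d=1$ properly split off. What the paper's argument buys is self-containedness --- it needs nothing beyond the regularity of $F_r$ modulo $J$, at the cost of a two-variable induction; what yours buys is a conceptual explanation of where the constant comes from ($r(d-1) = \reg(R/I)$, read off the Koszul complex), at the cost of invoking the standard but nontrivial fact that the associated graded ring of a complete intersection is a polynomial ring over $R/I$.
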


\begin{proof} We use induction on $r$. The statement is clear for $r = 1$. Suppose that $r > 1$. We proceed by induction on $s$.
The statement is also clear if $s = 1$ by the Koszul complex. Thus, we may assume that $s \ge 2$.

Let $J = (F_1,\ldots,F_{r-1})$. Consider the following homomorphism
$$\phi: I^{s-1}(-d)\oplus J^s \stackrel{(F_r,1)}{\longrightarrow} I^s.$$
Since $I^s = (J + (F_r))^s = J^s + F_rI^{s-1}$, $\phi$ is surjective. Moreover, since $F_r$ is regular in $R/J$, the kernel of $\phi$ is given by $F_rJ^s$. Thus, we have the following short exact sequence
\begin{equation}\label{EQ2}
0 \longrightarrow J^s(-d)\longrightarrow I^{s-1}(-d)\oplus J^s \stackrel{(F_r,1)}{\longrightarrow} I^s\longrightarrow 0.
\end{equation}

By the induction hypothesis on $r$, we have
$\reg(J^s(-d))=\reg(J^s)+d = ds +(d-1)(r-1)+1.$
Furthermore, by the induction hypothesis on $s$, we have
$\reg(I^{s-1}(-d))= \reg(I^{s-1})+d = ds+(d-1)(r-1).$
Thus,
$$\reg(I^{s-1}(-d)\oplus J^s) = ds+(d-1)(r-1).$$
Combining with $(\ref{EQ2})$ and \cite[Corollary 20.19]{E}, we can conclude that
$$\reg(I^s) = ds + (d-1)(r-1),$$
and the lemma is proved.
\end{proof}

We are now ready to establish the general bound for $\reg(I(G)^s)$ stated in (\ref{eq.intro}).

\begin{theorem} \label{T1}
Let $G$ be a graph with edge ideal $I = I(G)$, and let $\nu(G)$ denote its induced matching number. Then for all $s \ge 1$, we have
$$\reg(I^s) \ge 2s+\nu(G)-1.$$
\end{theorem}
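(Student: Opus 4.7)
The plan is to exhibit an induced subgraph $H$ of $G$ for which we can compute $\reg(I(H)^s)$ exactly, and then transfer the lower bound to $G$ via Corollary \ref{C1}.

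First, I would choose an induced matching $\{e_1,\dots,e_\nu\}$ of $G$ of maximal size $\nu = \nu(G)$, and let $H$ be the induced subgraph of $G$ on the $2\nu$ vertices $\bigcup_{i=1}^\nu e_i$. By the very definition of an induced matching, the only edges of $H$ are precisely $e_1,\dots,e_\nu$, so
\[
I(H) = (x^{e_1}, x^{e_2}, \dots, x^{e_\nu}).
\]
Moreover, since the $e_i$ are pairwise disjoint, the monomial generators $x^{e_1},\dots,x^{e_\nu}$ involve pairwise disjoint sets of variables, and hence they form a regular sequence of homogeneous polynomials of degree $2$ in $R$.

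Next, I would invoke Lemma \ref{L2} with $d=2$ and $r=\nu$, which gives directly
\[
\reg\bigl(I(H)^s\bigr) = 2s + (2-1)(\nu-1) = 2s + \nu(G) - 1
\]
for every $s \ge 1$. Finally, since $H$ is an induced subgraph of $G$, Corollary \ref{C1} yields
\[
\reg\bigl(I(G)^s\bigr) \ge \reg\bigl(I(H)^s\bigr) = 2s + \nu(G) - 1,
\]
which is the desired inequality.

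There is no real obstacle to speak of: once the induced matching is used to produce a complete intersection of quadrics as an induced-subgraph edge ideal, both ingredients (the exact regularity formula for powers of a regular sequence in Lemma \ref{L2} and the monotonicity under induced subgraphs in Corollary \ref{C1}) do all the work. The only conceptual point worth emphasizing is that the induced (rather than just plain) matching hypothesis is exactly what is needed to ensure $I(H) = (x^{e_1},\dots,x^{e_\nu})$ with no extra generators, so that the generators really form a regular sequence.
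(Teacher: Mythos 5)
Your proposal is correct and follows exactly the same route as the paper: take a maximum induced matching, pass to the induced subgraph $H$ on its vertices so that $I(H)$ is a complete intersection of quadrics, compute $\reg(I(H)^s)$ via Lemma \ref{L2}, and conclude with Corollary \ref{C1}. Your closing remark about why the \emph{induced} matching hypothesis is essential is a nice clarification of a point the paper leaves implicit.
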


\begin{proof} For simplicity of notation, let $r = \nu(G)$. Suppose that $\{u_1v_1,\dots,u_rv_r\}$ is an induced matching in $G$. Let $H$ be the induced subgraph of $G$ on the vertices $\bigcup_{i=1}^r \{u_i,v_i\}$. Then, $I(H) = (u_1v_1,\dots,u_rv_r)$ is a complete intersection. Thus, by Lemma $\ref{L2}$, we have
$$\reg(I(H)^s) = 2s + (2-1)(r-1) = 2s + \nu(G)-1.$$
It now follows from Corollary $\ref{C1}$ that
$$\reg(I(G)^s) \ge \reg(I(H)^s) \ge 2s + \nu(G)-1.$$
\end{proof}

Recall that if $G$ and $H$ are two graphs then the \emph{union} of $G$ and $H$, denoted by $G \cup H$, is the graph with vertex set $V_G \cup V_H$ and edge set $E_G \cup E_H$. When $H$ consists of exactly one edge $e = \{u,v\}$, we write $G + e$ instead of $G \cup H$.

The following lemma is crucial in proving the reverse inequality of (\ref{eq.intro}) to get the statement of Theorem \ref{intro11}.

\begin{lemma}\label{L4}
Let $K$ be a forest and let $\nu(K)$ be its induced matching number. Suppose that $G$ and $H$ are induced subgraphs of $K$ such that
$$E_H \cup E_G = E_K \text{ and } E_H \cap E_G = \emptyset.$$
Then, for all $s \ge 1$, we have
$$\reg(I(H)+I(G)^s) \le 2s + \nu(K)-1.$$
\end{lemma}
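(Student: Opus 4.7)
The plan is to proceed by induction on the pair $(s, |V_K|)$ ordered lexicographically. The base case $s = 1$ is the standard regularity bound $\reg(I(K)) \le \nu(K) + 1$ for edge ideals of forests, proved by an easy secondary induction on the number of vertices using Theorem~\ref{thm.induction}(2), together with the elementary fact that $\nu(K \setminus N[x]) + 1 \le \nu(K)$ for any leaf $x$ of $K$ (obtained by appending $xy$ to any induced matching of $K \setminus N[x]$). The case $|E_K|=0$ is trivial. For the inductive step with $s \ge 2$, we drop isolated vertices (Remark~\ref{rmk.flat}) and pick a leaf $x$ of $K$ with unique neighbor $y$. Applying the short exact sequence associated to the variable $x$ gives
\[
\reg(J) \le \max\{\reg(J, x),\; \reg(J:x) + 1\},
\]
and we analyze the two terms separately.

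For $\reg(J, x)$: modulo $x$, the ideal $J$ becomes $I(H \setminus x) + I(G \setminus x)^s$, which is a partition of $E_{K \setminus x}$ in the strictly smaller forest $K \setminus x$, so the induction hypothesis on $|V_K|$ gives the bound $2s + \nu(K)-1$. For $\reg(J:x)$: using that $x$'s only edge in $K$ is $xy$, a direct computation yields $J:x = I(H) + y \cdot I(G)^{s-1}$ when $xy \in E_G$, and $J:x = I(H \setminus x) + (y) + I(G)^s$ when $xy \in E_H$. In the first sub-case, a second short exact sequence with $y$ bounds $\reg(J:x)$ by the maximum of $\reg(I(H \setminus y)) \le \nu(K) + 1$ and $\reg(I(H \setminus y) + (N_H(y)) + I(G)^{s-1}) + 1$. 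Since $N_H(y)$ and $N_G(y)$ are disjoint (because $E_H \sqcup E_G = E_K$), killing the variables of $N_H(y)$ turns the second ideal into $I(H \setminus N_H[y]) + I(G \setminus N_H(y))^{s-1}$, which is a sum of the same form on the subforest $K \setminus N_H(y)$. Invoking the induction hypothesis at $s-1$ then yields $\reg(J:x) + 1 \le 2s + \nu(K) - 1$, closing the first sub-case.

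The second sub-case, $xy \in E_H$, is the main obstacle. Here $(y) \subseteq J:x$, so modulo $y$ the ideal is $I(H \setminus \{x,y\}) + I(G \setminus y)^s$, a partition on $K \setminus \{x,y\}$; the induction hypothesis gives $\reg(J:x) \le 2s + \nu(K \setminus \{x,y\}) - 1$. To close the bound one needs the combinatorial inequality $\nu(K \setminus \{x,y\}) \le \nu(K) - 1$, and this is \emph{not} automatic, as one can exhibit forests in which every induced matching of maximum size avoids both endpoints of the chosen leaf edge. The intended remedy is to choose the initial leaf strategically: whenever some leaf of $K$ has its edge in $E_G$, pick one and use the first sub-case. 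In the remaining stuck configuration, where every leaf edge of $K$ lies in $E_H$, one either verifies the matching inequality directly from the forest structure in this restricted setting or else replaces the leaf colon by a colon against a carefully chosen non-leaf vertex $u$---for instance, an endpoint of some $E_G$-edge adjacent to a leaf whose edge is in $E_H$---exploiting the general identity $(I(G)^s : u) = (N_G(u)) \cdot I(G)^{s-1} + I(G \setminus u)^s$ to mimic the same two-step reduction. The central combinatorial ingredient is to verify that one of these options always produces a valid colon element for which the induction closes.
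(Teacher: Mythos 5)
Your overall strategy (induction on $s$ and the size of the graph, reduction via a colon by a single \emph{vertex} $x$ chosen as a leaf of $K$) diverges from the paper's argument, and it has genuine gaps in both sub-cases. The one you acknowledge yourself is fatal as stated: in the sub-case $xy \in E_H$ you need $\nu(K \setminus \{x,y\}) \le \nu(K) - 1$, which can fail (e.g.\ for the path $a,b,y,x$ one has $\nu(K)=\nu(K\setminus\{x,y\})=1$), and the proposed remedies (``choose the leaf strategically,'' ``colon against a carefully chosen non-leaf vertex'') are not carried out; the final sentence of your proposal concedes that the central verification is missing. There is also an unacknowledged error in the first sub-case: for a leaf $x$ of $G$ with neighbor $y$ one has $I(G)^s : x = y\,I(G)^{s-1} + I(G\setminus x)^s$, not $y\,I(G)^{s-1}$ alone (take $G$ the path $x,y,z,w$ and $s=2$: then $(zw)^2 \in I(G)^2 : x$ but $(zw)^2 \notin y\,I(G)$). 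Once the missing summand $I(G\setminus x)^s$ is restored, the branch $(J:x)+(y)$ becomes an ideal of the same shape at the \emph{same} power $s$ on $K\setminus\{x,y\}$, so induction only yields $\reg(J:x) \le 2s+\nu(K\setminus\{x,y\})-1$, and you again need the same non-automatic inequality $\nu(K\setminus\{x,y\})\le\nu(K)-1$ to absorb the $+1$ from the exact sequence. So both branches run into the same obstruction.

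The paper avoids all of this by taking the colon with respect to the \emph{edge monomial} $xy$, where $x$ is a leaf of $G$ (not of $K$), and by inducting on $s + |V_G|$. The identity $I(G)^s : xy = I(G)^{s-1}$ (\cite[Lemma 2.10]{MO}) drops the power cleanly, and the degree-$2$ twist in the resulting exact sequence exactly matches the gain $2(s-1) \to 2s$; meanwhile the cokernel is $R/\bigl(I(H+xy)+I(G\setminus x)^s\bigr)$, i.e.\ the edge $xy$ is \emph{transferred from $G$ to $H$} rather than deleted from $K$, so the ambient forest never shrinks in a way that would require $\nu$ to decrease --- only the trivial inequality $\nu(K')\le\nu(K)$ for induced subgraphs is ever used. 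That is the key idea your proposal is missing, and without it (or a genuinely new combinatorial lemma replacing the false inequality) the argument does not close.
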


\begin{proof} We shall use induction on $m:= s+|V_G|$. If $m = 1$ then we must have $s = 1$ and $V_G = \emptyset$. In this case, $E_H = E_K$, and so $I(H)+I(G)^s = I(K)$. The statement, thus, follows from \cite[Theorem 2.18]{Z}.

Suppose that $m \ge 2$. If $G$ consists of no edges then $E_H = E_K$, and we have $I(H) + I(G)^s = I(K)$. The assertion again follows from \cite[Theorem 2.18]{Z}.

Assume now that $E_G \not= \emptyset$. Being a subgraph of $K$, $G$ is a forest. In particular, $G$ contains a leaf. Let $x$ be a leaf in $G$ and let $y$ be the unique neighbor of $x$ in $G$. By \cite[Lemma 2.10]{MO}, we have $I(G)^s: xy = I(G)^{s-1}$. Since all ideals being discussed are monomial ideals, this implies that
$$(I(H)+I(G)^s):xy = (I(H):xy) + (I(G)^s:xy) = (I(H):xy) + I(G)^{s-1}.$$
Moreover,
$$(xy)+I(H)+I(G)^s = (xy)+I(H)+I(G\setminus x)^s = I(H+xy)+I(G\setminus x)^s.$$
Therefore, we have the following short exact sequence:
$$0 \rightarrow \big(R\big/I(H):xy+I(G)^{s-1}\big)(-2)\rightarrow R\big/(I(H)+I(G)^s)\rightarrow R/I(H+xy)+I(G\setminus x)^s\rightarrow 0.$$
This yields
\begin{equation}\label{EQ22}
\reg(I(H)+I(G)^s) \le \max\{\reg (I(H):xy+I(G)^{s-1})+2, \reg (I(H+xy)+I(G\setminus x)^s)\}.
\end{equation}

Let $\{u_1,\dots,u_p\} = N_H(x) \cup N_H(y)$ be all the vertices of $H$ which are adjacent to either $x$ or $y$. Let $H'= H \setminus \{u_1,\dots, u_p\}$. Then,
$$I(H):xy = I(H')+(u_1,\dots,u_p).$$
Observe that since $E_G \cap E_H = \emptyset$, none of the vertices $\{u_1,\dots,u_p\}$ are in $G$. Therefore, by Remark \ref{rmk.1var}, we have
$$\reg(I(H):xy+I(G)^{s-1}) = \reg(I(H')+(u_1,\dots,u_p)+I(G)^{s-1})= \reg(I(H')+I(G)^{s-1}).$$
This, coupled with (\ref{EQ22}), implies that
\begin{equation}\label{EQ23}
\reg(I(H)+I(G)^s) \le \max\{\reg(I(H')+I(G)^{s-1})+2, \reg(I(H+xy)+I(G\setminus x)^s)\}.
\end{equation}

Let $K':= H' \cup G$. Since $E_H \cap E_G=\emptyset$, we have $E_{H'}\cap E_G =\emptyset$. This implies that $K'$ is an induced subgraph of $K$. Thus, $K'$ is a forest, and
$$\nu(K') \le \nu(K).$$
Now, applying the induction hypothesis to $K', G$ and $H'$ with power $(s-1)$, we have
\begin{align}
\reg (I(H')+I(G)^{s-1}) & \le 2(s-1)+\nu(K')-1 \le 2(s-1) + \nu(K)-1. \label{eq.EQ1}
\end{align}
On the other hand, since $x$ is a leaf of $G$, we have $E_{H+xy} \cap E_{G\setminus x}  =\emptyset$ and $K = (H+xy) \cup (G\setminus x)$. Thus, we can apply the induction hypothesis to $K, G \setminus x$ and $H+xy$ to get
\begin{align}
\reg(I(H+xy)+I(G\setminus x)^s) & \le 2s+\nu(K)-1. \label{eq.EQ2}
\end{align}

Putting (\ref{EQ23}), (\ref{eq.EQ1}) and (\ref{eq.EQ2}) together we get the desired inequality
$$\reg (I(H)+I(G)^s) \le 2s + \nu(K)-1,$$
and the lemma is proved.
\end{proof}

We end this section by stating and proving our first main result.

\begin{theorem}\label{T2}
Let $G$ be a forest with edge ideal $I=I(G)$. Let $\nu(G)$ denote the induced matching number of $G$. Then for all $s \ge 1$, we have
$$\reg(I^s) = 2s + \nu(G)-1.$$
\end{theorem}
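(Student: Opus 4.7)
The plan is to sandwich $\reg(I^s)$ between matching bounds that are already in hand. The lower bound $\reg(I^s) \ge 2s + \nu(G) - 1$ is exactly the content of Theorem \ref{T1}, which holds for every graph, so no extra work is needed in that direction.

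For the matching upper bound, I would invoke Lemma \ref{L4} in its most degenerate form. Take $K = G$ (which is a forest by hypothesis), let $G' := G$, and let $H$ be the induced subgraph of $G$ on its vertex set but with no edges (equivalently, the graph on $V_G$ consisting only of isolated vertices). Then $E_H = \emptyset$, so trivially $E_H \cap E_{G'} = \emptyset$ and $E_H \cup E_{G'} = E_K$, so the hypotheses of Lemma \ref{L4} are met. Since $I(H) = 0$, the ideal $I(H) + I(G')^s$ collapses to $I(G)^s$, and the lemma yields
\[
\reg(I^s) \;=\; \reg\bigl(I(H) + I(G')^s\bigr) \;\le\; 2s + \nu(K) - 1 \;=\; 2s + \nu(G) - 1.
\]
Combining the two bounds gives the equality claimed in the theorem.

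There is essentially no obstacle: the hard work has already been absorbed into Theorem \ref{T1} (from a complete intersection on the induced matching) and Lemma \ref{L4} (the delicate induction on $s + |V_G|$ exploiting leaves of the forest via the colon $I(G)^s : xy = I(G)^{s-1}$). The only care needed is to make sure the degenerate choice $E_H = \emptyset$ is actually permitted by the statement of Lemma \ref{L4}; inspecting its proof confirms that the base case $m = 1$ (or, more generally, the case where $G$ has no edges) is handled at the outset by reducing to $I(K)$ and citing \cite[Theorem 2.18]{Z}, so this plug-in is legitimate.
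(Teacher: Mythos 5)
Your proposal is correct and coincides with the paper's own proof: lower bound from Theorem \ref{T1}, upper bound from Lemma \ref{L4} with $K=G$ and $H$ edgeless. The only cosmetic point is that to satisfy the hypothesis that $H$ be an \emph{induced} subgraph of $K$ literally, one should take $H=\emptyset$ (no vertices) rather than the edgeless graph on all of $V_G$; since $I(H)=0$ either way, this changes nothing.
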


\begin{proof} For any $s\ge 1$, by Theorem \ref{T1} we have $\reg(I^s) \ge 2s + \nu(G)-1$. On the other hand, applying Lemma \ref{L4} by taking $K = G$ and $H = \emptyset$, we get $\reg(I^s) \le 2s + \nu(G)-1$.
Hence, for all $s \ge 1$,
$$\reg(I^s) = 2s + \nu(G)-1.$$
\end{proof}

%%%%%%%%%%%%%%%%%%%%%%%%%%%%%%%%%%%%%%%%%%%%%%%%%%%%%%%%%%%%%%%%%%%%%%%%%%%%%%

\section{Regularity of powers of cycles} \label{sec.cycle}

In this section, we prove our second main result in explicitly computing the regularity of $I(G)^s$, for all $s \ge 1$, when $G$ is a cycle. For this class of graphs, $b$ can be computed from the induced matching number of $G$, but it is not necessarily true that $s_0 = 1$ as in the case for forests. Our method is to make use of Banerjee's recent work \cite{Ba} to reduce the problem to estimating the regularity of edge ideals of certain class of graphs; noting that these graphs contain Hamiltonian paths and cycles. To this end, we shall employ our new bounds for the regularity of edge ideals found in Section \ref{sec.Hamilton}.

We start with a lemma that specifies Theorem \ref{thm.Ba} to our situation, i.e., when $G$ is a cycle.

\begin{lemma} \label{lem.square}
Let $C_n$ be the $n$-cycle and assume that its vertices (in order) are $x_1, \dots, x_n$. Let $I = I(C_n)$. Then
$$x_n^2 \in (I^{s+1} : M),$$
where $M$ is a minimal generator of $I^s$, if and only if $n$ is odd, say $n=2l+1$ for some $1 \le l \le s$, and
$$M = (x_1x_2) \dots (x_{2l-1}x_{2l})N \text{ with } N \in I^{s-l}.$$
In this case we also have $x_nx_j \in I^{s+1} : M$ for all $j = 1, \dots, n$.
\end{lemma}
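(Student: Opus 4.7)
The plan is to apply Theorem~\ref{thm.Ba}. Since $C_n$ is simple, $\{x_n, x_n\} \notin E$, so $x_n^2 \in (I^{s+1}:M)$ if and only if $x_n$ is even-connected to itself with respect to $M$. By Definition~\ref{def.evenconnected}, such an even-connection is witnessed by a path $p_0, p_1, \ldots, p_{2l+1}$ in $C_n$ with $p_0 = p_{2l+1} = x_n$ and $l \ge 1$. Under the paper's convention, a path is a simple walk, so a closed one is a cycle; and the only cycle in $C_n$ through $x_n$ is $C_n$ itself. Hence the path must have length exactly $n$, forcing $2l+1 = n$ and $n$ odd.

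The path traverses $C_n$ in one of two directions starting at $x_n$, i.e., as $x_n, x_1, \ldots, x_{n-1}, x_n$ or as $x_n, x_{n-1}, \ldots, x_1, x_n$. By the symmetry of $C_n$ both directions yield the same collection of ``even edges'' $\{p_{2j+1}, p_{2j+2}\}$ for $j = 0, \ldots, l-1$, namely the pairwise disjoint edges $\{x_1,x_2\}, \{x_3,x_4\}, \ldots, \{x_{2l-1}, x_{2l}\}$. The multiplicity condition of Definition~\ref{def.evenconnected} then demands that each of these $l$ distinct edges appears among the $e_i$'s, so the monomial $(x_1x_2)(x_3x_4)\cdots(x_{2l-1}x_{2l})$ divides $M$. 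Writing $M = (x_1x_2)\cdots(x_{2l-1}x_{2l})N$, the complementary factor $N$ is a product of $s-l$ of the $e_i$'s, and hence lies in $I^{s-l}$; in particular $l \le s$. Conversely, given such an $M$, the same path $x_n, x_1, x_2, \ldots, x_{n-1}, x_n$ witnesses even-connectedness, and Theorem~\ref{thm.Ba} yields $x_n^2 \in (I^{s+1}:M)$.

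For the final claim, I handle the containments $x_n x_j \in (I^{s+1}:M)$ case by case. When $j \in \{1, n-1\}$, $\{x_n, x_j\}$ is an edge of $C_n$, so $x_n x_j \in I \subseteq (I^{s+1}:M)$; the case $j = n$ has just been shown. For the remaining $j \in \{2, \ldots, n-2\}$, I would exhibit an explicit even-connecting path from $x_n$ to $x_j$: if $j = 2i-1$ is odd, use the ``forward'' path $x_n, x_1, x_2, \ldots, x_{2i-1}$ of odd length $2i-1$; if $j = 2i$ is even, use the ``backward'' path $x_n, x_{n-1}, x_{n-2}, \ldots, x_{2i}$ of odd length $n-2i = 2(l-i)+1$. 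A direct index check shows that in each case the even edges along the path lie among $\{x_1, x_2\}, \{x_3, x_4\}, \ldots, \{x_{2l-1}, x_{2l}\}$, each a factor of $M$ with the required multiplicity, so Theorem~\ref{thm.Ba} again applies.

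The main obstacle, or rather the key structural observation, is the first one: because $C_n$ contains no subcycle other than itself, any simple closed path through $x_n$ must be the entire $n$-cycle, and it is this rigidity that forces both $n$ odd and $n = 2l+1$. Once that is in hand, the rest of the argument is essentially parity-and-index bookkeeping along $C_n$.
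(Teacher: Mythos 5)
Your argument follows the same route as the paper's: apply Theorem \ref{thm.Ba}, identify the even-connecting closed walk from $x_n$ to itself with the full cycle, read off the ``even'' edges $\{x_1,x_2\},\dots,\{x_{2l-1},x_{2l}\}$ to obtain the divisibility of $M$, and prove the converse and the final claim by exhibiting explicit even-connections. The forward direction, the computation that both orientations of the cycle yield the same set of even edges, and the case analysis for $x_nx_j$ all check out and agree with the paper (the paper proves the ``if'' direction instead by the direct regrouping $x_n^2M=(x_nx_1)(x_2x_3)\cdots(x_{2l}x_n)N\in I^{s+1}$, but your appeal to Theorem \ref{thm.Ba} is equally valid).

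There is, however, one genuine gap, precisely at the step you single out as the crux: you dismiss non-simple even-connections by invoking the convention that a ``path'' is a simple walk. That convention cannot be relied on here. The notion of even-connectedness is imported from Banerjee's work, where the sequence $p_0,\dots,p_{2l+1}$ is allowed to repeat vertices and edges --- this is exactly why condition (3) of Definition \ref{def.evenconnected} bounds the number of times an edge of the walk may coincide with a given $e_i$; for a genuinely simple path in a simple graph that condition is automatic and would never need to be stated. If even-connections were restricted to simple paths, the ``only if'' half of Theorem \ref{thm.Ba} (every degree-two generator of $I^{s+1}:M$ comes from an edge or an even-connection) would not be the correct statement to cite. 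Consequently, in your ``only if'' direction you must still rule out the possibility that $x_n$ is even-connected to itself only by a non-simple walk (one that backtracks or winds around more than once), in which case neither the length $2l+1=n$ nor the identification of the even edges with $\{x_1,x_2\},\dots,\{x_{2l-1},x_{2l}\}$ is immediate. The paper closes this by taking a \emph{shortest} even-connection $p_0,\dots,p_{2l+1}$, observing that $x_n$ cannot recur in its interior (else one could truncate to a shorter even-connection), and that any repetition $p_i=p_j$ of interior vertices would produce a simple closed sub-walk of $C_n$, which is forced to be all of $C_n$ and hence to pass through $x_n$ --- a contradiction. Only then is the walk simple, equal to $C_n$, and of odd length $n=2l+1$. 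Adding this minimality argument (or, alternatively, a winding-number argument showing that every closed walk of odd length in $C_n$ with $n$ odd has length at least $n$) would complete your proof; everything else is in order.
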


\begin{proof} Let us start by proving the ``if'' direction. Suppose that $n = 2l+1$ and $M = (x_1x_2) \dots (x_{2l-1}x_{2l})N$ with $N \in I^{s-l}$ for some $1 \le l \le s$. Then
$$x_n^2M = (x_nx_1)(x_2x_3) \dots (x_{2l}x_n)N \in I^{l+1+s-l} = I^{s+1}.$$
Thus, $x_n^2 \in (I^{s+1}: M).$

We proceed to prove the ``only if'' direction. Indeed, by Theorem \ref{thm.Ba}, if $x_n^2 \in (I^{s+1} : M)$ then $x_n$ must be even-connected to itself with respect to $M$. Let $x_n = p_0, p_2, \dots, p_{2l+1} = x_n$ be a shortest even-connected path between $x_n$ and itself.

Consider the case where there exists some $1 \le j \le 2l$ such that $p_j = x_n$. If $j$ is odd then $x_n = p_0, \dots, p_j = x_n$ is a shorter even-connected path between $x_n$ and itself, a contradiction. If $j$ is even then $x_n = p_j, \dots, p_{2l+1} = x_n$ is also a shorter even-connected path between $x_n$ and itself, a contradiction. Thus, we may assume that $x_n$ does not appear in the path $p_0, \dots, p_{2l+1}$ except at its endpoints.

If the path $p_0, \dots, p_{2l+1}$ is not simple, say for $1 \le i < j \le 2l$ we have $p_i = p_j$ (and we choose such $i$ and $j$ so that $j-i$ is minimal), then $p_i, \dots, p_j$ is a simple closed path lying on $C_n$. This can only occur if this simple path is in fact $C_n$, which then violates our assumption about the appearance of $x_n$ in the path $p_0, \dots, p_{2l+1}$. Therefore, $x_n = p_0, \dots, p_{2l+1} = x_n$ is a simple closed path on $C_n$. It follows that $x_n = p_0, \dots, p_{2l+1} = x_n$ is $C_n$. This, in particular, implies that $n = 2l+1$ is odd, and by re-indexing if necessary, we may assume that $p_i = x_i$ for all $i = 1, \dots, n$. Moreover, by the definition of even-connected path, we have that
$$M = (p_1p_2) \dots (p_{2l-1}p_{2l}) N = (x_1x_2) \dots (x_{2l-1}x_{2l}) N,$$
where $N$ is the product of $s-l$ edges in $C_n$ (whence, $N \in I^{s-l}$).

The last statement of the theorem follows from Theorem \ref{thm.Ba} and the following observation: for $j$ odd, $p_0, \dots, p_j$ is an even-connected path between $x_n$ and $x_j$; and for $j$ even, $p_j, \dots, p_{2l+1}$ is an even-connected path between $x_j$ and $x_n$.
\end{proof}

We are now ready to prove our next main result.

\begin{theorem} \label{thm.cycle}
Let $C_n$ be an $n$-cycle. Let $I = I(C_n)$ and let $\nu = \lfloor \frac{n}{3} \rfloor$ denote the induced matching number of $C_n$. Then
$$\reg(I) = \left\{ \begin{array}{rcll} \nu + 1 & \text{if} & n \equiv 0,1 & (\text{mod } 3) \\
\nu + 2 & \text{if} & n \equiv 2 & (\text{mod } 3), \end{array} \right.$$
and for all $s \ge 2$, we have
$$\reg(I^s) = 2s + \nu -1.$$
\end{theorem}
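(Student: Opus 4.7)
The lower bound $\reg(I^s) \geq 2s + \nu - 1$ for all $s \ge 1$ is immediate from Theorem~\ref{T1}, so only the upper bounds need work. For $s = 1$, I would apply Theorem~\ref{thm.induction}(2) at a vertex $x_1$, so that the recursion expresses $\reg(I(C_n))$ through the two paths $C_n \setminus x_1 = P_{n-1}$ and $C_n \setminus N[x_1] = P_{n-3}$; the classical formula $\reg(I(P_m)) = \lfloor (m+1)/3 \rfloor + 1$ then gives $\reg(I) \le \nu + 1$ when $n \not\equiv 2 \pmod 3$ and $\reg(I) \le \nu + 2$ when $n \equiv 2 \pmod 3$. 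The matching lower bound $\reg(I) \ge \nu + 2$ in the residue-$2$ case can be cited from Jacques' thesis (or extracted from Hochster's formula applied to the top $\NN^n$-degree). For $n \in \{3,4\}$ the ideal $I(C_n)$ has a linear resolution by Fr\"oberg's theorem, whence \cite{HHZ} forces $\reg(I^s) = 2s = 2s + \nu - 1$ for all $s \ge 1$. I therefore assume $n \ge 5$ and prove $\reg(I^s) \le 2s + \nu - 1$ for $s \ge 2$ by induction on $s$.

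The inductive framework of Banerjee \cite{Ba} gives
$$\reg(I^{s+1}) \le \max\Bigl\{\reg(I^s),\ 2s + \max_{M} \reg\bigl((I^{s+1}:M)\bigr)\Bigr\},$$
where $M$ ranges over the minimal monomial generators of $I^s$. The induction hypothesis handles $\reg(I^s)$, and it suffices to prove $\reg((I^{s+1}:M)) \le \nu + 1$ for each such $M$. By Theorem~\ref{thm.Ba} the colon is minimally generated in degree $2$, and its polarization is the edge ideal $I(G^\ast)$ of a graph $G^\ast$ with vertex set $V(C_n) \cup \{y_i : x_i^2 \in (I^{s+1}:M)\}$, each $y_i$ being a leaf attached to $x_i$; polarization preserves regularity. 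Let $G^\circ$ be the induced subgraph of $G^\ast$ on $V(C_n)$. Then $G^\circ$ contains the Hamiltonian cycle $x_1, \dots, x_n, x_1$, and for every edge $\{x_a, x_{a+1}\}$ appearing in $M$, the even-connected path $x_{a-1}, x_a, x_{a+1}, x_{a+2}$ shows that $\{x_{a-1}, x_{a+2}\}$ is an edge of $G^\circ$. For $n \ge 5$ this is a chord of $C_n$ of the form $\{x_{t-1}, x_{t+2}\}$ demanded by Theorem~\ref{thm.Hamiltoncycle}, which yields $\reg(I(G^\circ)) \le \lfloor n/3 \rfloor + 1 = \nu + 1$.

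If no $x_i^2$ lies in $(I^{s+1}:M)$, then $G^\ast = G^\circ$ and we are done. Otherwise Lemma~\ref{lem.square} forces $n$ to be odd and, via its last statement, ensures that each $x_i$ with $x_i^2 \in (I^{s+1}:M)$ is even-connected to every vertex of $C_n$; equivalently, in $G^\ast$ the vertex $x_i$ is adjacent to all other $x_j$ and to the leaf $y_i$. Let $U$ denote the set of such $x_i$. Applying Theorem~\ref{thm.induction}(2) at $x_i \in U$ gives $\reg(I(G^\ast)) \le \max\{\reg(G^\ast \setminus x_i),\ \reg(G^\ast \setminus N[x_i]) + 1\}$; since $N[x_i]$ contains all of $V(C_n)$ together with $y_i$, the set $G^\ast \setminus N[x_i]$ consists solely of isolated $y$-vertices, contributing only $0 + 1 = 1$. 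Each element of $U \setminus \{x_i\}$ remains universal on $V(C_n) \setminus \{x_i\}$ inside $G^\ast \setminus x_i$, so iterating this reduction strips off $U$ entirely, producing $\reg(I(G^\ast)) \le \max\{\reg(G^\ast \setminus U),\ 1\}$. All surviving $y$-vertices become isolated in $G^\ast \setminus U$, so Remark~\ref{rmk.flat} gives $\reg(G^\ast \setminus U) = \reg(G^\circ \setminus U)$, and Theorem~\ref{thm.induction}(1) combined with the previous paragraph yields $\reg(G^\circ \setminus U) \le \reg(G^\circ) \le \nu + 1$. Thus $\reg((I^{s+1}:M)) \le \nu + 1$, and Banerjee's inequality closes the induction with $\reg(I^{s+1}) \le 2s + \nu + 1 = 2(s+1) + \nu - 1$.

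The main obstacle is the bookkeeping in the square case: Lemma~\ref{lem.square} permits a single $M$ to be ``pointed'' at several vertices $x_i$ (for instance $M = (x_1x_2)(x_3x_4)(x_5x_1)$ on $C_5$ puts both $x_5^2$ and $x_2^2$ into the colon ideal), so multiple leaves $y_i$ can coexist in $G^\ast$ and a Hamiltonian path in $G^\ast \setminus x_i$ is no longer available. The observation that makes the iteration collapse cleanly is that each $x_i \in U$ is genuinely universal on $V(C_n)$ (not merely pendant-carrying), so repeated application of Theorem~\ref{thm.induction}(2) never inflates the ``$+1$'' term beyond $1$, and the induced-subgraph monotonicity of Theorem~\ref{thm.induction}(1) transfers the chord-augmented Hamiltonian-cycle bound from $G^\circ$ down to $G^\circ \setminus U$ irrespective of the size of $U$.
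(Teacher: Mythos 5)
Your proposal is correct and follows essentially the same route as the paper: the lower bound via Theorem \ref{T1}, Banerjee's reduction to showing $\reg(I^{s+1}:M) \le \nu+1$, polarization, the chord $\{x_{a-1},x_{a+2}\}$ coming from an edge of $M$, and Theorem \ref{thm.Hamiltoncycle}. The only (harmless) divergences are cosmetic: the paper dispatches $n=3,4$ with Theorem \ref{thm.Hamiltonpath} rather than Fr\"oberg--HHZ, and it removes the polarized pendant edges one at a time via \cite[Lemma 2.10]{DHS} to get $\reg(J)=\reg(G)$, whereas you delete the universal vertices of $U$ directly with Theorem \ref{thm.induction}(2) --- both arguments are valid and rest on the same observation from Lemma \ref{lem.square} that each such $x_i$ is adjacent to every vertex of $C_n$.
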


\begin{proof} The first statement follows from \cite[Theorem 7.6.28]{J}. We shall now prove the second statement of the theorem. In light of Theorem \ref{T1}, it suffices to show that
$$\reg(I^s) \le 2s + \nu -1.$$
By applying \cite[Theorem 5.2]{Ba} and using induction, it is enough to prove that
\begin{align}
\reg(I^{s+1}:M) \le \nu + 1 \label{eq.cycle1}
\end{align}
for any $s \ge 1$ and any minimal generator $M$ of $I^s$.

By Theorem \ref{thm.Ba}, $I^{s+1}:M$ is generated in degree 2, and its generators are of the form $uv$, where either $\{u,v\}$ is an edge in $C_n$, or $u$ and $v$ are even-connected with respect to $M$. Observe that if $x_n^2$ is a generator of $I^{s+1}:M$ then by Lemma \ref{lem.square}, we get that $n$ is odd and $x_nx_j$ is a generator of $I^{s+1}:M$ for all $j = 1, \dots, n$.
In this case, in polarizing $I^{s+1}:M$, we replace the generator $x_n^2$ by $x_ny_n$, where $y_n$ is a new variable. Thus, if we denote by $J$ the polarization of $I^{s+1}:M$ then $J$ has the form
$$J = I(G) + (x_{i_1}y_{i_1}, \dots, x_{i_t}y_{i_t}),$$
where $G$ is a graph over the vertices $\{x_1, \dots, x_n\}$, $y_{i_1}, \dots, y_{i_t}$ are new variables, and $x_{i_1}^2, \dots, x_{i_t}^2$ are all non-squarefree minimal generators of $I^{s+1}:M$. Note that polarization does not change the regularity, and we have
$$\reg(J) = \reg(I^{s+1}:M).$$
Note also that since $I^{s+1}:M$ have all edges of $C_n$ as minimal generators, $G$ has $C_n$ as a Hamiltonian cycle.

Consider the case that $I^{s+1}:M$ indeed has non-squarefree minimal generators (i.e., $t \not= 0$). For each $j = 0, \dots, t$, let $H_j$ be the graph whose edge ideal is $I(G) + (x_{i_1}y_{i_1}, \dots, x_{i_j}y_{i_j})$. Then, $H_0 = G$ and $J = I(H_t)$.

By Lemma \ref{lem.square} (and following our observation above), $\{x_{i_j},x_l\}$ is an edge in $G$ for any $j=1, \dots, t$ and any $l = 1, \dots, n$. This implies that the induced subgraph $H_j \setminus N_{H_j}[x_{i_j}]$ of $H_j$ consists of isolated vertices $\{y_{i_1}, \dots, y_{i_{j-1}}\}$. It follows that $\reg(H_j \setminus N_{H_j}[x_{i_j}]) = 0$, and by \cite[Lemma 2.10]{DHS}, we have
$$\reg(H_j) = \reg(H_j \setminus x_{i_j}).$$
However, $y_{i_j}$ is an isolated vertex in $H_j \setminus x_{i_j}$ and $H_j \setminus \{x_{i_j}, y_{i_j}\}$ is an induced subgraph of $H_j \setminus y_{i_j} = H_{j-1}$, and so we get
\begin{align}
\reg(H_j) & = \reg(H_j \setminus x_{i_j}) = \reg(H_j \setminus \{x_{i_j}, y_{i_j}\}) \nonumber \\
& \le \reg(H_j \setminus y_{i_j}) = \reg(H_{j-1}). \label{eq.cycle2}
\end{align}
Noting that $H_{j-1}$ is an induced subgraph of $H_j$, and by Theorem \ref{thm.induction}, this implies that $\reg(H_{j-1}) \le \reg(H_j)$. Therefore, coupled with (\ref{eq.cycle2}), we obtain
$$\reg(H_j) = \reg(H_{j-1}) \text{ for all } j = 1, \dots, t.$$
In particular, it follows that
$$\reg(J) = \reg(H_t) = \reg(H_0) = \reg(G).$$

To prove (\ref{eq.cycle1}), it now remains to show that $\reg(G) \le \nu + 1$. Indeed, if $n=3$ or $n=4$ then since $G$ contains a Hamiltonian path, by Theorem \ref{thm.Hamiltonpath}, we have
$$\reg(G) \le \BL \dfrac{n+1}{3} \BR + 1 = \BL \dfrac{n}{3} \BR + 1 = \nu + 1.$$
Otherwise, if $n \ge 5$ then without loss of generality we may assume that $M = (x_2x_3)M'$, where $M'$ is a generator for $I^{s-1}$, and it can be observed in this case that $x_1$ and $x_4$ are even-connected with respect to $M$. Thus, $G$ contains the edge $\{x_1,x_4\}$. It now follows from Theorem \ref{thm.Hamiltoncycle} that
$$\reg(G) \le \BL \dfrac{n}{3} \BR + 1 = \nu + 1.$$
The theorem is proved.
\end{proof}

\begin{remark} When $n \equiv 2 \ (\text{mod } 3)$ in Theorem \ref{thm.cycle}, we have
$\reg(I(C_n)^s) = 2s + \nu -1$ if and only if $s \ge 2$. Thus, in this case, $s_0 = 2 > 1$.
\end{remark}

We conclude the paper by raising the following question. This question is inspired by previous work of Herzog, Hibi and Zheng \cite{HHZ}, of Nevo and Peeva \cite{NP}, and by our main results, Theorems \ref{T2} and \ref{thm.cycle}.

\begin{question} Let $G$ be a graph with edge ideal $I = I(G)$. Let $\nu(G)$ denote the induced matching number of $G$. For which graph $G$ are the following true?
\begin{enumerate}
\item $b = \nu(G)-1$, i.e., $\reg(I^s) = 2s + \nu(G)-1$ for all $s \gg 0$.
\item $s_0 \le \reg(G) - 1$, i.e., $\reg(I^s) = 2s + b$ for all $s \ge \reg(G)-1$.
\end{enumerate}
\end{question}

%%%%%%%%%%%%%%%%%%%%%%%%%%%%%%%%%%%%%%%%%%%%%%%%%%%%%%%%%%%%%%%%%%%%%%%%%%%%%%

\end{document}